\pdfoutput=1
\documentclass[a4paper]{amsart}	

\usepackage[utf8]{inputenc}
\usepackage[T1]{fontenc}
\usepackage{textcomp}
\usepackage{amsmath, amssymb}
\usepackage{amsthm}
\usepackage{mathrsfs}
\usepackage{xcolor}
\usepackage{import}
\usepackage{xifthen}
\pdfminorversion=7
\usepackage{pdfpages}
\usepackage{transparent}
\newcommand{%
	
	\import{./}{.tex}
}[2][0.8]{%
	
	\import{./}{#2.tex}
}

\newtheorem{problem}{Problem}[section]
\newtheorem{theorem}[problem]{Theorem}
\newtheorem{question}[problem]{Question}

\newtheorem{lemma}[problem]{Lemma}
\newtheorem{corollary}[problem]{Corollary}
\theoremstyle{definition}
\newtheorem{definition}[problem]{Definition}
\newtheorem*{remark}{Remark}
\usepackage{latexsym}
\usepackage{amsmath}
\usepackage{amsfonts}
\usepackage{mathrsfs}

\usepackage{textcomp}
\usepackage{anyfontsize}
\usepackage{booktabs}
\usepackage{multirow}
\usepackage{makecell}
\allowdisplaybreaks
\newcommand{\ra}[1]{\renewcommand{\arraystretch}{#1}}

\numberwithin{equation}{section}
\pdfsuppresswarningpagegroup=1

\numberwithin{equation}{section}
\begin{document}

\title{Curvature estimates for stable minimal surfaces with a common free
boundary
}
\author{Gaoming Wang}

\maketitle
\begin{abstract}
The minimal surfaces meeting in triples with equal angles along
a common boundary naturally arise from soap films and other
physical phenomenon. They are also the natural extension of 
the usual minimal surface. In this paper, we consider the
multiple junction surface and show the Bernstein's Theorem 
still holds for stable multiple junction surface in some
special case. The key part is to derive the $L^p $ estimates
of the curvature for multiple junction surface.

\end{abstract}

\section{Introduction}

In the paper of Schoen, Simon, Yau \cite{schoen1975curvature}, 
they'd showed the
$L^p$ curvature estimates for the minimal hypersurfaces. As a corollary, they
could
get the generalized Bernstein's theorem. That is, the only stable immersed
hypersurface $\Sigma^n \hookrightarrow \mathbb{R} ^{n+1}$ ($\text{dim} \Sigma
^n=n$ and $n\le 5$) with area growth condition
(i.e. $\mathcal{H}^{n}(B^{\Sigma }_R)\le CR^n$ for all the intrinsic ball
$B^{\Sigma }$ with radius $R$) is a hyperplane. In \cite{colding2002estimates},
Colding, Minicozzi showed
the stable and $2$-sided, simply connected minimal
surface in $\mathbb{R}^3 $ has
quadratic area growth. So the only stable, complete, 2-sided minimal
surface in $\mathbb{R}^3 $ is a plane. Thus, considering the triple junctions
appearing in the nature phenomenon such as soap film, we have the
following nature problem like the usual generalized Bernstein's Theorem.
\begin{problem}
	\label{Main_Problem}
	If three 2-sided minimal surfaces with boundary meet at the same boundary and each
	two of them meet at exactly $120$ degrees along $\Gamma$. Suppose they are
	complete under the distance function and stable in some suitable variations,
	is it true that each of them is flat?
\end{problem}

The surfaces with triple junctions have been studied extensively due to nature
phenomena. J.Taylor \cite{taylor1976structure} proved certain
locally area-minimizing surfaces should have two types of singularities.
The first one is the 
$Y $-type singularity, which we're interested in. Further more, G. Lawlor and
F. Morgan \cite{lawlor1996curvy} have showed the triple junction surfaces
are always locally minimizing area in any arbitrary dimension and 
codimension. In the result of C. Mese and S. Yamada \cite{mese2006parameterized},
they have reproduced soap films with $Y $-type singularities studied by 
Taylor \cite{taylor1976structure} by minimizing energy with a suitable
boundary condition. So it's nature to extend other properties of 
minimal surfaces to the case of minimal triple junction surface. 
Following from the Schoen's rigidity theorem 
for catenoids \cite{schoen1983uniqueness},
J. Bernstein and F. Maggi \cite{bernstein2021symmetry}
showed the rigidity of $Y $-shaped catenoid (the left figure in 
Fig.\ref{fig:YCatenoid}).

Following from Allard's regularity \cite{allard1972first}, L. Simon 
\cite{simon1993cylindrical} showed if a stationary integral 2-varifold
has density $\frac{3}{2} $ at one point, then it looks like
the $C^{1,\mu} $ triple junction minimal surface. Recently, 
B. Krummel \cite{krummel2014regularity} showed higher regularities
along their triple junctions for stationary integral varifolds.

Besides considering the minimal triple junction surface, one can move
triple junction surface by mean curvature like the usual mean curvature
flow for surfaces. For examples, A. Freire \cite{freire2010mean} (graph case) 
and D. Depner, H. Garcke, et al. 
\cite{depner2013linearized,depner2014mean} (general case) have
considered the mean curvature flow with triple junctions. F. Schulze
and B. White \cite{schulze2020local} showed the local regularity for mean curvature flow
with triple edges. Note that mean curvature flow of curves with
triple junctions in $\mathbb{R}^2  $
is just the usual network flow. There are relatively more
results on this direction, see for examples \cite{mantegazza2004motion,bronsard1993three,ilmanen2014short,tonegawa2016blow}.

In this paper, we will want to extend the curvature estimates
for stable minimal surfaces
to the case of minimal triple junction surface
to see if we can get the Bernstein type theorem like minimal surfaces.
Instead of triple junctions, we can consider arbitrary number
of surfaces meet at the same boundary.

\begin{theorem}
	Suppose $M= (\theta _1\Sigma _1, \cdots ,\theta _q\Sigma _q; \Gamma)$ is the
	(orientable) minimal multiple junction surface
	in $\mathbb{R}^3 $. We assume $M$ is complete, 
	stable and
	has quadratic area growth. Furthermore, we assume $\Gamma$ is compact and
	the angles between $\Sigma _i, \Sigma _j$ keep same along $\Gamma$ for 
	$1\le i,j\le q$.
	Then each $\Sigma_i$ is flat.
\end{theorem}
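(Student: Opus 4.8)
The plan is to adapt the Schoen–Simon–Yau $L^p$ curvature estimate strategy to the multiple junction setting, using the stability inequality together with the $120$-degree (equal-angle) matching condition along $\Gamma$ to control the boundary contributions. Let me think about how the stability inequality, Simons' identity, and the junction conditions interact.

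First let me recall the structure of the classical argument for a single stable minimal hypersurface $\Sigma^n \hookrightarrow \mathbb{R}^{n+1}$. One takes the stability inequality
$$\int_\Sigma |A|^2 \varphi^2 \le \int_\Sigma |\nabla \varphi|^2$$
valid for all compactly supported $\varphi$, combines it with Simons' identity
$$\Delta |A|^2 = -2|A|^4 + 2|\nabla A|^2 + \text{(curvature terms, which vanish in flat ambient space)}$$
and the Kato-type improvement $|\nabla A|^2 \ge (1 + \frac{2}{n})|\nabla |A||^2$, to derive a differential inequality of the form
$$|A|\Delta|A| \ge -|A|^4 + \frac{2}{n}|\nabla |A||^2.$$
Then one plugs $\varphi = |A|^{1+q}\eta$ into the stability inequality (for suitable exponent $q$ and cutoff $\eta$), integrates by parts, and uses the differential inequality to absorb the bad $|A|^4$ term. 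This yields an $L^p$ bound $\int_\Sigma |A|^p \le C$ for $p < 2 + \sqrt{2/n}$ (in dimension $n$), and in dimension $n = 2$ this range covers enough to conclude $\int |A|^{2+\epsilon}$ is controlled. Combined with quadratic area growth, a scaling/iteration argument forces $A \equiv 0$.

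Now for the multiple junction case. My plan is as follows.

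\medskip
\noindent\textbf{Step 1: Set up the stability inequality on the junction surface.} I would first interpret stability for $M = (\theta_1\Sigma_1,\dots,\theta_q\Sigma_q;\Gamma)$. Variations must respect the junction: along $\Gamma$ the three (or $q$) sheets move compatibly. The second variation formula will produce, for each sheet, a bulk term $\int_{\Sigma_i}(|\nabla\varphi_i|^2 - |A_i|^2\varphi_i^2)\,\theta_i$ together with boundary integrals along $\Gamma$ coming from integration by parts. The key structural fact I expect to exploit is that, because the sheets meet at equal angles ($120^\circ$ for $q=3$), the boundary terms from the different sheets \emph{cancel} when the test functions $\varphi_i$ are chosen to agree appropriately along $\Gamma$. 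This is the junction analogue of the classical first/second variation, and it is presumably established in the earlier part of the paper. So stability gives
$$\sum_{i=1}^q \theta_i\int_{\Sigma_i}|A_i|^2\varphi_i^2 \le \sum_{i=1}^q \theta_i\int_{\Sigma_i}|\nabla\varphi_i|^2$$
for admissible families $(\varphi_1,\dots,\varphi_q)$ matching along $\Gamma$.

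\medskip
\noindent\textbf{Step 2: Simons-type inequality on each sheet with junction boundary conditions.} On each open sheet $\Sigma_i$, Simons' identity holds exactly as in the classical minimal surface case since the ambient space is flat. The new feature is that when I integrate by parts in the Schoen–Simon–Yau argument, I pick up boundary integrals over $\Gamma$. The crucial point is to show these junction boundary terms either cancel across the sheets or have a favorable sign, using the geometry of the triple junction. Here I expect the equal-angle condition and the fact that the total configuration is stationary (so the conormals sum to zero along $\Gamma$, i.e. $\sum_i \theta_i \nu_i = 0$) to force cancellation of the leading boundary contributions, just as in Step 1. I would choose the test function $\varphi_i = |A_i|^{1+q}\eta$ (abusing notation for the exponent), where $\eta$ is a cutoff that is constant near $\Gamma$ so that the matching condition along $\Gamma$ is automatically satisfied.

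\medskip
\noindent\textbf{Step 3: Derive the $L^p$ estimate and conclude.} Combining Steps 1 and 2 exactly as in Schoen–Simon–Yau, but summing over sheets and using the boundary cancellation, I obtain
$$\sum_i \theta_i\int_{\Sigma_i}|A_i|^{2+2p}\eta^2 \le C\sum_i \theta_i\int_{\Sigma_i}|A_i|^{2p}|\nabla\eta|^2$$
for $p$ in the admissible range (for surfaces $n=2$, this is a nonempty range by the Kato inequality $|\nabla A|^2 \ge 2|\nabla|A||^2$). With quadratic area growth, I choose $\eta$ to be a logarithmic cutoff supported on an annulus $B_{R^2}\setminus B_R$, let $R \to \infty$, and find that $\int_{\Sigma_i}|A_i|^{2+2p} = 0$, hence $A_i \equiv 0$ and each $\Sigma_i$ is a piece of a plane. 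The compactness of $\Gamma$ and completeness then upgrade "piece of a plane" to "flat."

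\medskip
\noindent\textbf{Main obstacle.} The heart of the difficulty is Step 2: making the boundary integrals along $\Gamma$ vanish or have the right sign. In the single-surface Bernstein theorem there is no boundary, so the integration by parts is clean; here every integration by parts in the Simons/stability machinery generates a $\Gamma$-integral involving the conormal derivative of $|A_i|$ and the second fundamental form of $\Gamma$ inside each $\Sigma_i$. One must verify that the matching of the sheets at equal angles, combined with $C^{1,\mu}$ (or higher) regularity of the junction from \cite{simon1993cylindrical, krummel2014regularity}, makes the relevant conormal and curvature terms compatible so that they cancel in the sum $\sum_i\theta_i(\cdots)$. A careful choice of test function that is radially symmetric and constant in a neighborhood of the compact $\Gamma$ should kill the genuinely problematic terms, reducing the boundary contribution to something controlled by the (bounded) geometry of $\Gamma$; but verifying this cleanly — and checking that the Kato constant and the stability-derived exponent range still close up the estimate after the boundary terms are accounted for — is where the real work lies.
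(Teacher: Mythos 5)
Your proposal contains a genuine gap, and it sits exactly where you yourself locate the ``main obstacle'': the boundary terms along $\Gamma$ do \emph{not} cancel, and your proposed test functions are not admissible. First, the stability inequality for a multiple junction surface is not the boundary-free inequality you write in Step 1; the second variation retains the term $-\sum_i\theta_i\int_\Gamma \phi_i^2\, \boldsymbol{H}_\Gamma\cdot\tau_i$, which for a \emph{compact} $\Gamma$ cannot vanish identically (a closed curve has nonzero curvature somewhere), and the equal-angle condition does not kill it. Likewise, the Simons-identity integration by parts produces a second boundary term of the form $\int_\Gamma |A|^{2p-3}\tau(|A|)\phi^2$. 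Handling these two terms is the actual content of the paper's proof: the $\boldsymbol{H}_\Gamma\cdot\tau$ term is given a favorable sign by rotating the chosen normal field $W_0$ along $\Gamma$ by $90$ degrees (and, in the separate case where one sheet is flat, it is controlled by a Gauss--Bonnet argument following B.~White), while the $\tau(\log|A|)$ term carries a factor $(p-1)/2$ and is killed only by sending $p\to 1$ --- which is why the paper's $L^p$ estimate is restricted to $p\in(1,\tfrac54)$ and why the argument is confined to dimension $2$, rather than working in the full Schoen--Simon--Yau exponent range as you assume.

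Second, and more fundamentally, the admissibility (compatible) condition is not that the $\varphi_i$ ``agree along $\Gamma$'': it is that the functions plugged into the stability inequality arise as $\varphi_i = W\cdot\nu_i$ for a \emph{single} vector field $W$ along $\Gamma$, which for the equal-angle triple junction reads $\varphi_1+\varphi_2+\varphi_3 = 0$ on $\Gamma$. Your choice $\varphi_i = |A_i|^{1+q}\eta$ with $\eta$ constant and nonzero near $\Gamma$ is therefore inadmissible: these functions are nonnegative, so their sum vanishes on $\Gamma$ only if each vanishes there, i.e.\ only if $|A_i|\equiv 0$ on $\Gamma$, which is what one is trying to prove. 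Taking instead $\eta\equiv 0$ near $\Gamma$ restores admissibility trivially but forfeits all curvature control near the junction, and the cutoff error cannot be absorbed. This is precisely why the paper constructs the elaborate test function $\phi_i = \mathrm{sign}(c_i)|c_i|^{1/p}\bigl(\rho\, g_i^{(p-1)/p}+\rho_r-\rho\bigr)$, where $c_i = W_0\cdot\nu_i$ and $g_i=\prod_{j\neq i}|A_j|$ is extended off $\Gamma$ by the finite-order extension lemma: with this choice, $\mathrm{sign}(\phi_i)|A_i|^{p-1}|\phi_i|^p = c_i\prod_j |A_j|^{p-1}$ is the $\nu_i$-projection of the single field $\bigl(\prod_j|A_j|^{p-1}\bigr)W_0$, so compatibility holds exactly, and the resulting singular factors are integrable precisely because $p$ is close to $1$. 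Without this (or an equivalent) construction, Steps 2 and 3 of your outline cannot be carried out.
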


The terminology will be explained in Section 2.

The special case is the
triple junction surface, $M=(\Sigma _1, \Sigma _2, \Sigma
_3; \Gamma)$ with $\Gamma$ compact. Note the angles between $\Sigma _i, 
\Sigma _j$ are $\frac{2\pi}{3}$ for all $i\neq j$. So from the above theorem, 
we know the $Y$-shaped catenoid is non-stable in our sense. 
\begin{figure}[ht]
 \centering
 \includegraphics[width=0.8\textwidth]{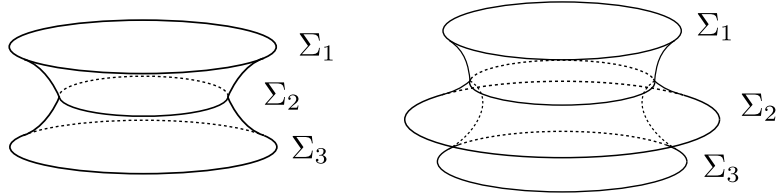}
 \caption{Two kinds of $Y$-shaped catenoid}
 \label{fig:YCatenoid}
\end{figure}

Another case is the $Y$-shaped bent helicoid. Like the usual
bent helicoid, one can construct $Y $-shaped bent helicoid
by the classical Bj\"orling's formula(see \cite{meeks2007bending}
for example). One can choose three unit normal vector
fields making equal angles with each other instead of
one along a unit circle in the construction.
It has a circle as the triple junction
and hence it is not stable.

\begin{figure}
\begin{center}
	\includegraphics[scale=0.1]{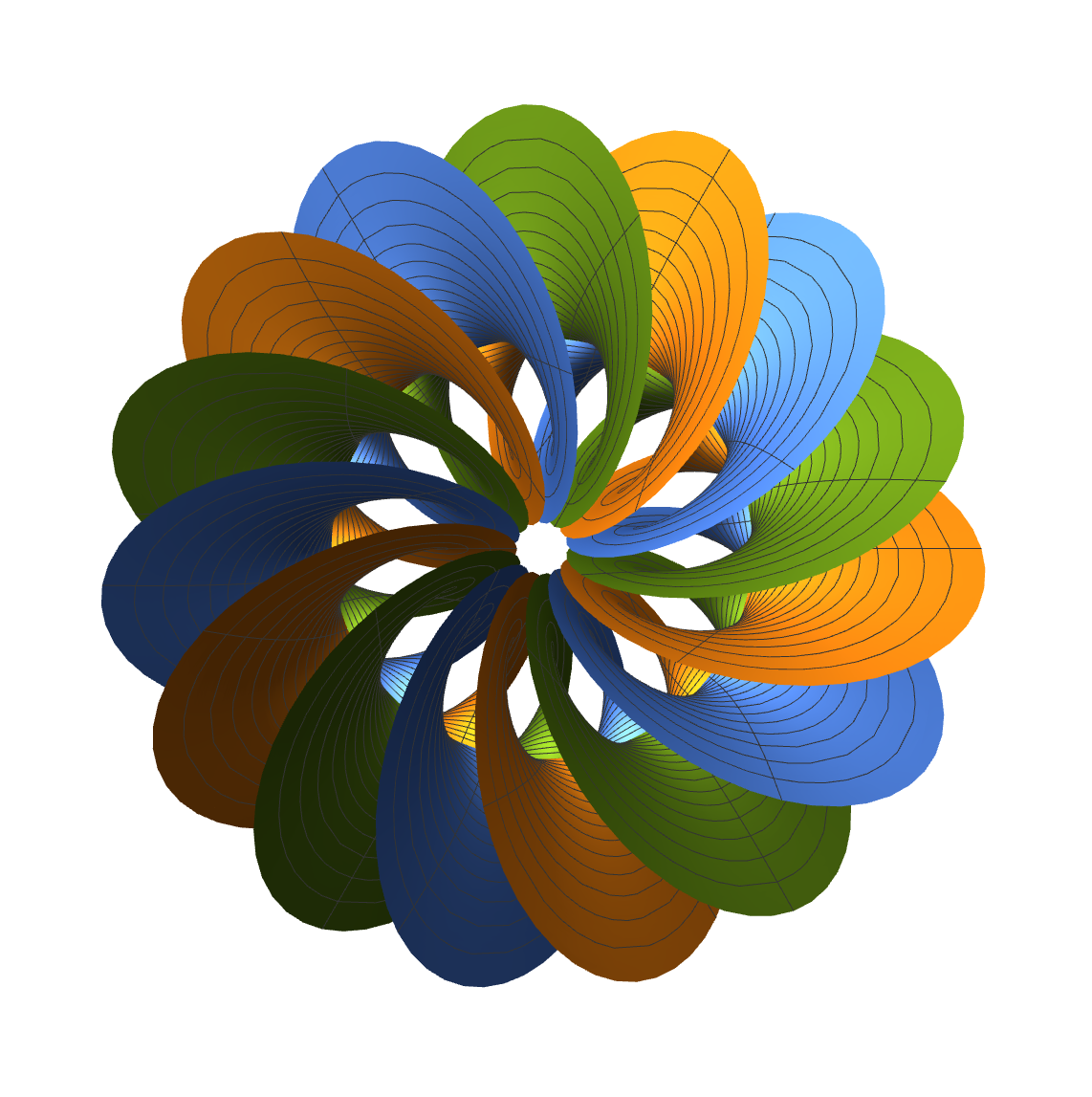}
	\includegraphics[scale=0.14]{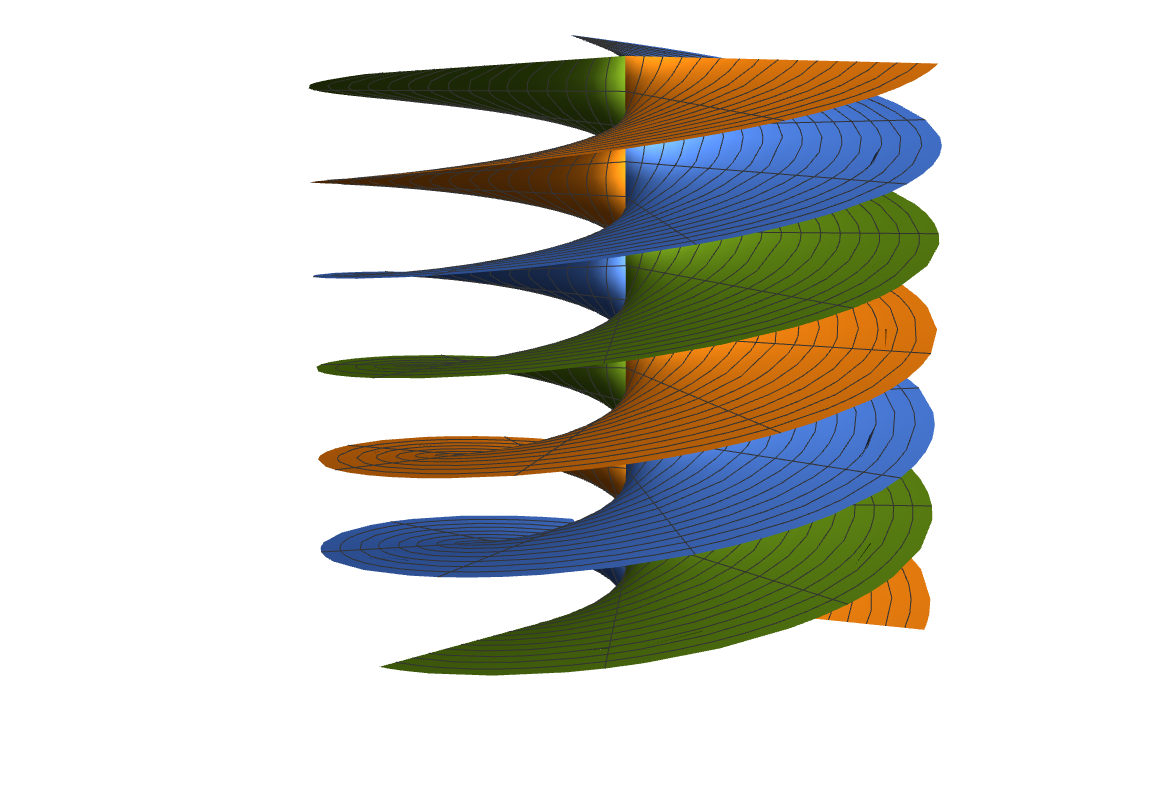}
\end{center}
\caption{$Y$-shaped bent helicoid and usual helicoid}
\label{fig:Y-helicoid}
\end{figure}

For the case if $\Gamma$ is a straight line, we also have similar result.

\begin{theorem}
	Suppose $M$ has the same condition with Theorem \ref{Main_Problem} except
	$\Gamma$ being a straight line instead of being compact. The each $\Sigma _i$ is
	flat.
	\label{Second_Main_Theorem}
\end{theorem}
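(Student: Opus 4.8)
The plan is to prove Theorem 1.4 (the straight-line case) by reducing it to the compact case already established in the first main theorem, using the translation symmetry that a straight-line junction $\Gamma$ provides. The key observation is that when $\Gamma$ is a straight line, the whole configuration $M=(\theta_1\Sigma_1,\dots,\theta_q\Sigma_q;\Gamma)$ is invariant in a weak sense under translations along the direction of $\Gamma$, and more importantly the $L^p$ curvature estimate — which is the technical heart of the paper and which I would establish on intrinsic balls $B^M_R$ that stay away from the ends of $\Gamma$ — is purely local and does not actually distinguish between a compact $\Gamma$ and an unbounded straight $\Gamma$, provided we have quadratic area growth. So the first step is to isolate precisely where compactness of $\Gamma$ was used in the proof of the first theorem and check that it can be replaced by the straight-line hypothesis together with the stability inequality.

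\emph{Main steps.} First I would set up the pointwise curvature bound: at any interior point $p$ of one sheet $\Sigma_i$ (away from $\Gamma$) the classical Schoen--Simon--Yau machinery gives a bound on $|A|(p)$ in terms of the $L^p$ norm of $|A|$ on a ball, and the stability inequality combined with the logarithmic cutoff trick in dimension $n=2$ controls that $L^p$ norm by the area, hence by the quadratic area growth constant. Second, at points on or near $\Gamma$ I would use the reflection/unfolding argument appropriate to the $120^\circ$ (or equal-angle) junction, so that the three sheets together behave like a single stable minimal surface across the junction after the standard doubling; here is where the angle condition and stability in the ``suitable variations'' sense enter, and where the straight-line geometry of $\Gamma$ makes the unfolded surface genuinely complete. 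Third, with a uniform curvature bound $\sup_M |A|\le C/R$ on balls of radius $R$ and quadratic area growth, I would let $R\to\infty$; since the straight line gives a noncompact $\Gamma$ of infinite length, the rescaling $R\to\infty$ forces $|A|\equiv 0$ on each $\Sigma_i$, so each sheet is flat.

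The one genuinely new point compared with the compact case is the behavior at infinity along $\Gamma$ itself. When $\Gamma$ is compact one can cover it by finitely many balls and a single global cutoff suffices; when $\Gamma$ is a straight line one must either (a) use a logarithmic cutoff adapted to the product-like geometry near the line, exploiting that the line contributes only linear (not quadratic) area so it is negligible against the quadratic growth of the sheets, or (b) invoke the translation invariance to argue that $|A|$ is bounded uniformly along $\Gamma$ and then run a dimension-reduction / blow-down argument. I expect approach (a) to be cleaner: the straight-line hypothesis is exactly what guarantees that the tubular neighborhood of $\Gamma$ has the right scaling so that the cutoff error terms vanish in the limit.

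The hard part, and the step I would scrutinize most carefully, is verifying that the stability inequality and the curvature estimate survive uniformly up to and along the unbounded junction $\Gamma$. In the compact case the junction is a bounded set whose contribution is automatically lower order; with a straight line one must check that no curvature or area concentrates along $\Gamma$ as one moves to infinity — in particular that the constants in the $L^p$ estimate do not degenerate as the center of the ball slides along $\Gamma$. The straight-line assumption is precisely what rescues this: it provides the translation symmetry and the correct linear-versus-quadratic area comparison that prevent such degeneration, allowing the compact-case argument to be transplanted essentially verbatim once the cutoff is chosen with the line's geometry in mind.
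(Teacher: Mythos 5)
Your proposal has two genuine gaps, and both sit at the load-bearing points of your argument. First, the framing rests on a translation-invariance claim that is unjustified: nothing in the hypotheses makes the sheets $\Sigma_i$ invariant, even weakly, under translations along $\Gamma$ --- only the line itself is invariant. (A half-helicoid bounded by its axis is a complete minimal surface with straight-line boundary and no translational symmetry.) So your option (b), deducing uniform bounds on $|A|$ along $\Gamma$ from symmetry, has no support, and the ``transplant the compact-case argument verbatim'' plan inherits this defect. Second, your key step near the junction --- that after unfolding, ``the three sheets together behave like a single stable minimal surface'' --- is asserted, not proved, and it hides the two real issues. To even define the unfolding you need to know that rotating $\Sigma_j$ about the line $\Gamma$ so as to align conormals produces exactly $\Sigma_i$; this follows from Hopf's boundary lemma / unique continuation, and it is the first step of the paper's proof, not a formality. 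More seriously, stability of $M$ means nonnegativity of the second variation only for variations obeying the compatible condition $\phi_i = W\cdot\nu_i$, which couples all $q$ sheets; stability of the Schwarz double of one sheet requires free-boundary stability of that single sheet, and this transfer is not formal. It can be arranged --- transplant an even test function to every sheet with constant coefficients $a_k = W\cdot\nu_k$, which is admissible precisely because the angles are constant along a straight $\Gamma$, and use the congruence of the sheets --- but no such argument appears in your proposal, and it is exactly where the equal-angle and straight-line hypotheses do their work.

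For contrast, the paper's proof runs in the opposite direction and is much shorter: it never leaves the junction framework. It observes that for a straight line both boundary terms of the junction $L^p$ estimate vanish identically: $\boldsymbol{H}_\Gamma \equiv 0$ kills the $\boldsymbol{H}_\Gamma\cdot\tau$ term, while the congruence of the sheets (Hopf lemma) plus Schwarz reflection across $\Gamma$ gives $\tau_i(|A_i|)\equiv 0$, killing the $\tau(\log|A|)$ term; moreover $|A_i|=|A_j|$ along $\Gamma$, so the compatible condition reduces to one on $\mathrm{sign}(\phi)|\phi|^p$, which the cutoff $\phi=\mathrm{sign}(c)|c|^{1/p}\rho_r$ satisfies. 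The standard Schoen--Simon--Yau argument with quadratic area growth then forces $|A|\equiv 0$. Note that your worry about constants degenerating as balls slide to infinity along $\Gamma$ is a non-issue in this scheme: the constants in the $L^p$ estimate depend only on $M$, and it is the identical vanishing of the boundary terms --- not any compactness or symmetry of $\Gamma$ --- that makes the straight-line case \emph{easier}, not harder, than the compact case.
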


Note that this theorem is not enough to show the $Y$-shaped helicoid is
non-stable since it does not have quadratic area growth.

The key step proving the above theorems is the following curvature estimates for
minimal multiple junction surface.

\begin{theorem}
	\label{LpEstimate}
	Suppose $M$ is the minimal multiple junction surface. We assume $M$ is
	stable. Then for the smooth function $\phi _i$ defined
	on $\Sigma _i$ with compact support and satisfying compatible condition
	along $\Gamma$, i.e. $\text{sign}(\phi _i)\left|A_i\right|^{p-1}\left|\phi
	_i\right|^{p}$ will be the projection of a smooth vector field along $\Gamma$ to the normal 
direction of $\Sigma _i$. Then we have
\begin{align}
	\label{eq:LpEstimate}
	\sum_{i=1}^{q} \int_{\Sigma _i} \theta_i |\phi_i|^{2p}\left| A_i\right| ^{2p}\le& \sum_{i=1}^{q}
	\theta_i\left[ \int_{\Sigma _i} C \left|\nabla \phi _i\right|^{2}
	\left|A_i\right|^{2p-2}\left|\phi _i\right|^{2p-2}\right.\nonumber\\
	 &\left.\vphantom
																  { \int } +
	\int_{\Gamma} \left( \frac{p-1}{2}\left|\tau_i(\log \left|A_i\right|)
		\right|-\left< \boldsymbol{H}_\Gamma,\tau_i
\right>  \right)\left|A_i\right|^{2p-2}\left|\phi_i \right|^{2p}   \right] 
\end{align}
for $p\in(1,\frac{5}{4})$, where $\boldsymbol{H}_\Gamma$ is the curvature vector of $\Gamma$, $\tau_i$
is the outer conormal of $\Sigma _i$ along $\Gamma$. The constant
$C=C(M)$ doesn't rely on $p$. 
\end{theorem}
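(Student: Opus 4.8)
The plan is to run a Schoen--Simon--Yau type argument adapted to the junction, driven by two inputs: the second variation (stability) inequality for $M$ and the Simons identity on each sheet. First I would record the stability inequality in the form
\[
	\sum_{i=1}^q\theta_i\int_{\Sigma_i}|A_i|^2 f_i^2\le \sum_{i=1}^q\theta_i\Big[\int_{\Sigma_i}|\nabla f_i|^2-\int_\Gamma\langle\boldsymbol{H}_\Gamma,\tau_i\rangle f_i^2\Big],
\]
valid for every family $(f_i)_i$ whose associated normal fields $f_i\nu_i$ assemble, along $\Gamma$, into the normal projections of a single ambient vector field; this is exactly the compatibility condition imposed in the hypothesis. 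I would then insert $f_i=|A_i|^{p-1}\phi_i^{p}$, so that the left-hand side becomes precisely $\sum_i\theta_i\int_{\Sigma_i}|A_i|^{2p}|\phi_i|^{2p}$, the quantity to be estimated. Expanding $|\nabla f_i|^2$ produces a cutoff term $p^2|A_i|^{2p-2}|\phi_i|^{2p-2}|\nabla\phi_i|^2$ (which feeds the term $C|\nabla\phi_i|^2$, the constant being $p$-independent because $p$ ranges over the bounded interval $(1,\tfrac54)$), a gradient term $(p-1)^2|A_i|^{2p-4}|\phi_i|^{2p}|\nabla|A_i||^2$, and a cross term in $\nabla|A_i|\cdot\nabla\phi_i$.

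The second input is the Simons identity, which in $\mathbb{R}^3$ is the \emph{equality} $|A_i|\Delta|A_i|=|\nabla|A_i||^2-|A_i|^4$ (equivalently $\Delta\log|A_i|=-|A_i|^2$) away from the zeros of $A_i$, the Kato improvement being exact in dimension two. Testing this against $|A_i|^{2p-4}|\phi_i|^{2p}$ and integrating by parts on $\Sigma_i$ yields a relation among $\int_{\Sigma_i}|A_i|^{2p}|\phi_i|^{2p}$, the gradient term $\int_{\Sigma_i}|A_i|^{2p-4}|\phi_i|^{2p}|\nabla|A_i||^2$, the same cross term, and the boundary integral $\int_\Gamma|A_i|^{2p-2}|\phi_i|^{2p}\,\tau_i(\log|A_i|)$. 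This integration by parts is the origin of the $\tau_i(\log|A_i|)$ term along $\Gamma$, and I would check at this stage that $|A_i|^{2p-4}$ is integrable against the relevant weights near the isolated zeros of $A_i$ so that the boundary term is the only contribution.

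The heart of the proof is then to form a linear combination of these two relations with a weight tuned so that the interior gradient term $\int_{\Sigma_i}|A_i|^{2p-4}|\phi_i|^{2p}|\nabla|A_i||^2$ is eliminated, or left with a favorable sign, while the surviving boundary contribution carries exactly the coefficient $\tfrac{p-1}{2}$. Summing over $i$ with the weights $\theta_i$, the boundary terms coming from stability and from the Simons integration by parts assemble --- using the compatibility condition at $\Gamma$ --- into the single integrand $\big(\tfrac{p-1}{2}|\tau_i(\log|A_i|)|-\langle\boldsymbol{H}_\Gamma,\tau_i\rangle\big)|A_i|^{2p-2}|\phi_i|^{2p}$, with the conormal term inherited directly from the stability boundary term. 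The residual cross term would be handled by Young's inequality, distributing it between the (favorably signed) gradient term and the cutoff term, the latter absorbed into $C$.

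I expect the main obstacle to be precisely this bookkeeping. One must keep the coefficient of the leading quantity $\sum_i\theta_i\int_{\Sigma_i}|A_i|^{2p}|\phi_i|^{2p}$ strictly positive after every absorption step, while at the same time not enlarging the coefficient of the boundary term beyond $\tfrac{p-1}{2}$ and keeping the constant $C$ finite and independent of $p$. These competing demands turn into a constraint that is quadratic in $p$, and it is the positivity of the resulting coefficient --- equivalently, the requirement that the gradient term can be discarded with the correct sign once the cross term has been split off --- that forces the admissible range $p\in(1,\tfrac54)$. Verifying that the coefficient $\tfrac{p-1}{2}$ survives exactly, rather than being degraded by the normalization, is the delicate point where the exactness of the Simons identity in $\mathbb{R}^3$ is essential.
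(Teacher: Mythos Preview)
Your proposal is correct and follows essentially the same route as the paper: insert $|A|^{p-1}$ times the test function into the stability inequality, combine with the (exact, two-dimensional) Simons identity integrated by parts against $|A|^{2p-4}$ times the test function squared so as to eliminate the interior $|\nabla|A||^2$ term, and absorb the residual cross term by Cauchy/Young --- this is exactly how the paper obtains the boundary contribution $\tfrac{p-1}{2}\,\tau(\log|A|)$ and the conormal term $-\langle\boldsymbol{H}_\Gamma,\tau\rangle$. The only cosmetic difference is that the paper first proves the estimate with $\phi^2$ and then substitutes $\phi\mapsto \mathrm{sign}(\phi)|\phi|^p$ at the end, whereas you build the power $p$ in from the start; also, the constraint that emerges is linear rather than quadratic in $p$ (the first inequality in fact holds for $p<\tfrac32$, with $p<\tfrac54$ chosen only to keep constants uniform), but this does not affect the argument.
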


The compatible condition will make sure the variation is well-defined on $M$,
see (\ref{eq:compactible_condition}) for details. Note that the integration
of the term
$\left|A_i\right|^{2p-2}\left|\phi _i\right|^{2p}\tau_i(\left|A_i\right|)$ is
still well defined for $\left|A_i\right|=0$ as we'll explain later on.

For the case of triple
junction, i.e. $q=3$ and $\theta _1=\theta _2=\theta _3=1$, this condition is
equivalent to the following identity
\[
	\sum_{i=1}^{3} \text{sign}(\phi _i)\left|A_i\right|^{p-1}\left|\phi
	_i\right|^{p}=0
\]

The proof of (\ref{eq:LpEstimate}) is essentially following Schoen, Simon, Yau's
proof \cite{schoen1975curvature}. Before that, we need to calculate the second
variation formula to get the following stability operator.
\begin{equation}
	\label{eq:Stability}
	\sum_{i=1}^{q} \int_{\Sigma _i} \theta _i \left( \left|\nabla _{\Sigma _i}\phi
	_i\right|^2 -\left|A_i\right|^2\phi _i^2 \right) -\int_{\Gamma} \theta _i
	\phi _i^2 \boldsymbol{H}_\Gamma \cdot \tau_i   
\end{equation}
with $\phi _i$ satisfying compatible condition (\ref{eq:compactible_condition}).

After getting curvature estimates, we can choose a suitable function. The
trick part is we need our functions to satisfy
the compatible conditions. So near $\Gamma$, $\phi _i $ should satisfy some
compatible conditions and the gradient $\phi_i$ cannot
vanish near $\Gamma$. This is why we need $\Gamma$ is compact or $\Gamma$ is a
straight line. So we can choose $p$ close to 1 to control the term near $\Gamma$. For the part that
far from $\Gamma$, we can choose $\phi _i$ like the standard cutoff function in
a large ball. After choosing a suitable function, we can deduce the
curvature needs to vanish everywhere.

\section{Minimal surfaces with multiple junction}
In this section, we will fix some notations and give the definition of the \textit{minimal multiple
junction surface} and several related concepts.

For $q\in \mathbb{N}$, we suppose $\Sigma _1,\cdots ,\Sigma _q$ are all smooth
2-dimensional manifolds with boundary $\partial \Sigma _i$ and $\Gamma$ is a
smooth 1-dimensional manifold. We only consider the case that each $\Sigma _i $
is orientable so we have the well defined unit normal vector field on $\Sigma _i
$ when immersing into $\mathbb{R}^3  $. For each $i$, we suppose there is a
diffeomorphism $p_i:\Gamma\rightarrow \partial \Sigma _i$. We will always
immerse $\Sigma _i $ into $\mathbb{R}^3  $ when talking about extrinsic
geometric quantities like normal vectors, second fundamental form and so on.
The table \ref{tab:notations} lists the notations used in this paper.

\begin{table}[htpb]
	\ra{1.3}
\renewcommand\cellalign{lc}
	\centering
	\caption{Notations}
	\label{tab:notations}
	\begin{tabular}{@{}lll@{}}
		\toprule
	Symbols & $\quad  $ & Meaning\\ \midrule
	$X \cdot Y $ &  & Standard inner product in $\mathbb{R}^3  $.\\
	$D_XY $ &  & Standard coderivative in $\mathbb{R}^3  $.\\ 
	$B_{r}(x) $ &  & Open ball centered at $x $ with
	radius $r $ in $\mathbb{R}^3  $.\\ \midrule
	$T\Sigma \ (\mathrm{resp.}\ T\Gamma) $ &  & Tangent bundle of $\Sigma$ (resp. 
	$\Gamma  $).\\
	$N\Sigma  $ (resp. $N\Gamma  $)&  & Normal bundle of $\Sigma  $
	(resp. $\Gamma  $) in $\mathbb{R}^3  $ .\\
	$\nu_i \in \Gamma (T\Sigma _i)$ &  & Unit normal vector filed on $\Sigma _i $ on $\mathbb{R}^3  $.
	\\
	$\tau_i \in  \Gamma (N
	\partial \Sigma _i)$&  & 
	\makecell{Unit outer conormal of $\partial \Sigma _i $
	on $\Sigma _i $ pointing outside of $\Sigma _i $.\\
	That is, $\tau_i(p) \in T_p\Sigma _i \cap N_p \partial \Sigma _i $
for any $p \in \partial \Sigma _i $.}
	\\
	$A_i(X,Y) =D_XY \cdot \nu_i$ &  & The second
	fundamental form on $\Sigma _i $.\\
	$\left|A_i\right| $ &  & The norm of second fundamental form on
	$\Sigma _i $.\\
	$\boldsymbol{H}_i $ (resp. $\boldsymbol{H}_\Gamma  $) &  & 
	The mean curvature vector of $\Sigma _i $ (resp. $\Gamma  $).\\
	$\mathrm{sign}(x) $	&  & The sign function. \\\bottomrule
	\end{tabular}
\end{table}

%
%
%
%
%
%
%
%
%
%
%

\subsection{Definition of multiple junction surfaces}

\begin{definition}
	We say $M=(\Sigma _1, \cdots ,\Sigma _q;\Gamma)$ is an \textit{intrinsic
	multiple junction surface} if it is a quotient space $\bigcup _{i=1}^q \Sigma
	_i / \!\sim$ where the equivalent relation is defined as the following, 

	$x\sim y$ if and only if $x=y$ or $x\in \partial  \Sigma _i,y\in \partial \Sigma _j$ for some
	$1\le i,j\le q$ and $x=p_i\circ p_j^{-1}(y)$.
	\label{def:quo_mul_jun}
\end{definition}

%

\begin{remark}
	We can define $M$ as a topological space with coordinate charts like the
	definition of smooth manifold. 
\end{remark}

\begin{definition}
	We say $M=(\Sigma _1, \cdots , \Sigma _q;\Gamma)$ is a
	\textit{multiple junction surface} in $\mathbb{R}^3 $ if $(\Sigma _1,\cdots ,\Sigma _q;\Gamma)$
	is an intrinsic multiple junction surface and there is a
	map $\varphi :M\rightarrow \mathbb{R}^3 $ such that the
	restriction of $\varphi$ on $\Sigma _i$ is a smooth immersion for each $1\le
	i\le q$. 
	
	We call the map $\varphi $ \textit{smooth immersion} for $M$.
	\label{def:mul_jun}
\end{definition}

Note that we have a nature metric on each $\Sigma _i$ for $1\le i\le q$ by
pulling back the metric on $\mathbb{R}^3 $.

In general, we will consider the multiple junction surface $M=(\Sigma _1,\cdots
,\Sigma _q; \Gamma)$ with constant density $\theta _1,\cdots ,\theta _q>0$
such that we
have constant density function $\theta _i$ on the surface $\Sigma _i$. 
We will write this
surface as $M=(\theta _1 \Sigma _1, \cdots ,\theta _q \Sigma _q;\Gamma)$. So the
associated 2-varifold of $M$ has the form
\[
	V_M:=\sum_{i=1}^{q} \theta _i \left|\Sigma _i\right|.
\]
Here, $\left|\Sigma _i\right|$ denotes the multiplicity one varifold associated
with the surface $\Sigma _i$. Note that we do not require $\theta _i $ to be 
the integers.


\begin{definition}
	We say a multiple junction surface $M=(\theta _1 \Sigma _1, \cdots ,\theta
	_q \Sigma _q; \Gamma)$ is minimal if each $\Sigma _i$ is a smooth minimal
	immersion and on $\Gamma$, we have
	\[
		\sum_{i=1}^{q} \theta _i \tau_i=0.
	\]
	\label{def:minimal}
\end{definition}

\begin{remark}
	By the regularity of B. Krummel \cite{krummel2014regularity}, suppose a
	stationary integral 2-varifold $V$ has the form

	$$V=\sum_{i=1}^{q} \theta _i\left|\Sigma _k\right|$$
	for	distinct $C^{1,\mu} $ embedded hypersurfaces-with-boundary $\Sigma _1, \cdots
	,\Sigma _q$ with a common boundary $\Gamma$ for some $0<\mu<1$. Then for any $Z\in \Gamma$, if
	$T_Z\Sigma _i$ are not the same plane in $\mathbb{R}^3 $, then we can find a
	neighborhood $O_Z$ of $Z$ such that $\Sigma _i$ is smooth and $\Gamma$ is a
	smooth curve in $O_Z$. Indeed, they are all analytic since $\mathbb{R}^3 $ is a real
	analytic manifold.
	
	Moreover, by the regularity of cylindrical tangent cones by L. Simon
\cite{simon1993cylindrical}, if a stationary integral 2-varifold in $U$ has density
$\frac{3}{2}$ at some point $Z\in U$, then near $Z$, $M$ is the varifold
associated with three $C^{1,\mu}$ minimal surface with a common boundary
$\Gamma$ and $\Gamma$ is still a $C^{1,\mu}$ curve for some $0<\mu<1$. So at
least for the triple junction, we can assume much weaker condition on the
above definition.
\end{remark}

For each $\Sigma _i$, we can define the intrinsic distance function $d_i(x,y)$
for $x,y\in \Sigma _i$, which is the length of the shortest geodetic jointing
$x,y$ on $\Sigma _i$. 

So we can define a global distance function $d(x,y)$ for $x\in \Sigma _i,
y\in \Sigma _j$ by
\begin{align*}
	d(x,y) :={} & \inf \left\{ \sum_{k=0}^{l-1} d
	_{i_k}(x_k,x_{k+1}):x_0=x,x_{l+1}=y, x_1,\cdots ,x_l\in \Gamma, \right.  \\
		    & \quad\ \  \left. \vphantom{\sum_{n=1}^{k} } i_0=i,i _{l-1}=j, 1\le
		  i_1,\cdots ,i_{l-2}\le q,\text{ for }l \in \mathbb{N} \right\} .
\end{align*}

Hence, we use $B_r^M(x)=\{y\in M:d(x,y)<r\} $ to denote the intrinsic ball on
$M$. 

Now we can define the distance function with respect to
$\Gamma$ as
\[
	d_\Gamma(x)=\inf_{y\in \Gamma} d(x,y)\text{ for }x\in M.
\]

\begin{definition}
	We say a multiple junction surface $M$ is complete if it is complete in the
	distance function $d(\cdot,\cdot)$. That is, every Cauchy sequence converges
	to some point in $M$ under this distance function.
	\label{def:complete}
\end{definition}
\subsection{Definition of functional spaces on triple junction surfaces}

From now on, we will always assume $M=(\theta _1\Sigma _1, \cdots ,\theta _q\Sigma _q;\Gamma)$ is a complete minimal multiple junction
surface in $\mathbb{R}^3 $.

When we consider the variation on $M$, we will need to consider a kind of vector
field on $M$. So we have the following definition.

\begin{definition}
	We say a map $X(x):M\rightarrow T_x \mathbb{R}^3 \simeq \mathbb{R}^3 $ is a
	$C^k$ vector field on $M$ if each $X|_{\Sigma _i}$ is a $C^k$
	vector field on $\Sigma _i$ for $1\le i\le  q$. We write this
	vector field space as $C^k(M;T\mathbb{R}^3 )$.
	\label{def: CkVector_Fields}
\end{definition}

Note that we do not require the vector field can be jointed smoothly cross the
junction. For example, let $M=(H_1,H_2;\Gamma)$ with $H_1, H_2$ the opposed two
half planes in $\mathbb{R}^3 $ and $\Gamma$ the straight line in $\mathbb{R}^3 $.
Then as an immersion, $M$ can be regarded as a smooth plane in $\mathbb{R}^3 $
but the smooth vector field on $M$ may not smooth on this plane.

Let's consider the space of functions on $M$. The nature definition is to
consider the function on $M$, which write as $f:M\rightarrow \mathbb{R} $ and
say it is $C^k$ if the restriction on each $\Sigma _i$ is $C^k$ up to boundary.

Somehow this function space is not big enough to contain the function we are
interested. For example, give a vector field $V\in C^k(M,T\mathbb{R}^3 )$, the
function defined by $V\cdot \nu_i$ for $x\in \Sigma _i$ is not a $C^k$ function
defined above. Actually it isn't well-defined on $M$ since on $\Gamma$, the value
will depend on $i$. So we define some large function spaces as following.
\begin{definition}
	We say a function $f(x):\bigcup _{i=1}^q \Sigma _i\rightarrow \mathbb{R}$ is in
	a Sobolev space $W^{k,p}(M)$ for $1\le p\le \infty$ if each restriction $f|_{\Sigma _i}$ is in 
	$W^{k,p}(\Sigma _i)$ for each $i$. 
	\label{def:Wkp_space}
\end{definition}

Similarly, we can define the $L^p$ space as $L^p(M)$ and continuous function
space $C^k(M)$. Usually, we will write $H^k(M)=W^{k,2}(M)$ to denote it as
Hilbert space.


By our definition, we do not impose any condition along $\Gamma $ 
for $f\in C^k(M) $. In general, we still wish
our function can also be extended to a suitable vector field on $M$ at least. 
So we say
$f\in C^k(M)$ satisfies \textit{compatible condition} if there exists
a $C^k$ vector field $W$ along $\Gamma$ (i.e. $W\in C^k(\Gamma,T\mathbb{R}^3
)$), such that
\begin{equation}
	\label{eq:ckcompactible_condition}
	f_i(x)= W(p ^{-1}_i(x))\cdot \nu_i(x) \text{ for }x\in \partial \Sigma
	_i,1\le i\le q
\end{equation}
where $f_i=f|_{\Sigma _i}$. 

Note that by Trace Theorem, if $f\in W^{1,p}(M)$ for some $1\le p\le \infty$,
then for any $1\le i\le q$, the function $f_i$ can be restricted to the boundary
$\partial \Sigma _i$ in the $L^p(\partial \Sigma _i)$ sense.

So we can say $f\in W^{1,p}(M)$ satisfies \textit{compatible condition} if there
is a $L^p_{\text{loc}}(\Gamma)$ vector field $W$ along $\Gamma$, such that
\begin{equation}
	\label{eq:compactible_condition}
	f_i|_{\partial \Sigma _i}(x)=W(p ^{-1}_i(x))\cdot \nu_i(x)\text{ for }
	\mathcal{H}^{1}
	\text{-a.e. }
	x\in \partial \Sigma _i, 1\le i\le q.
\end{equation}

Sometime we will write (\ref{eq:ckcompactible_condition}),(\ref{eq:compactible_condition}) as $f_i=W\cdot \nu_i$
for short.

Clearly, the function defined by $V\cdot \nu_i$ is in $C^k(M)$ for $V\in C^k(M,
T\mathbb{R}^3 )$ and satisfies (\ref{eq:ckcompactible_condition}). 
Conversely, for any $f\in C^k(M)$ satisfying (\ref{eq:ckcompactible_condition}), 
by definition we have $W\in
C^k(\Gamma, T\mathbb{R}^3 )$, so $f_i=W \cdot \nu_i$. For each $1\le i\le q$, we
can extend $W^\top $ on the whole $\Sigma _i$ to $\tilde{V} _i$ such that
$\tilde{V}_i$ is a $C^k$ tangential vector field on $\Sigma _i$. This is because
$W^\top $ is $C^k$ on $\Gamma$ and $\Gamma$ is smooth on $\Sigma _i$. So we can
define $V_i=\tilde{V} _i+f_i\nu_i$, which is a $C^k$ vector field on $\Sigma
_i$. So the vector field $V$ defined by $V=V_i$ on $\Sigma _i$ is in
$C^k(M,T\mathbb{R}^3 )$ and satisfies $V\cdot \nu_i=f_i$. 
\begin{remark}
	For the triple junction surface, the compatible condition has a simple form.
	For $f\in W^{1,p}(M)$, $f$ satisfies compatible condition if and only if
	\[
		f_1+f_2+f_3=0\ \ \  \mathcal{H}^{1}\text{-a.e. on }\Gamma.
	\]
\end{remark}

\begin{remark}
	All the definitions in this section can extend to arbitrary ambient
	manifolds with arbitrary dimension and codimension.
\end{remark}

\section{First and second variation of $M$}

Now we can consider the variation of $M$. We say $M_t=(\theta _1 \Sigma
_{1t},\cdots , \theta _q \Sigma _{qt};\Gamma_t), t\in (-\varepsilon ,\varepsilon )$ (considered as immersion) is a
$C^k$ variation of $M$ if each $\Sigma _{it}$ is a $C^k$ variation of
$\Sigma _i$ up to boundary and $\Gamma_t$ is a $C^k$ variation of $\Gamma$.
Of course, we can write this variation as one-parameter family of immersion
$\varphi_t(x):=\varphi (t,x):(-\varepsilon ,\varepsilon )\times M\rightarrow
\mathbb{R}^3 $ such that for each $t$, $M_t=\varphi_t(M)$ is a multiple junction
surface and restrict on each $\Sigma _i$ the variation $\varphi_t$ is $C^k$. 

For each $C^k$ variation $\varphi _t:M\rightarrow \mathbb{R}^3 $, there is an
associated vector field $V(x):M\rightarrow T_x\mathbb{R}^3 $, which is $C^k$ on $\Sigma _i$ for each $1\le i\le q$. 

Let $U\in M$ be a open subset in $M$ such that $\overline{U}$ is compact. Suppose we have a $C^k$ variation for $M$ with associated vector field
$V(x)=\frac{\partial \varphi_t (x)}{\partial t}$ with compact support in $U$. We can
define the first variation of the area of $M$ in $U$, which is given by
\begin{align*}
	\left.\frac{\text{d}}{\text{d}t}\right|_{t=0}\left|\varphi _t(U)\right|={}&
	\int_{M\cap U} \text{div}
	_{T_xM}V(x)d \|V_M\|(x)\\
		={}&\sum_{i=1}^{q} \int_{\Sigma _i \cap U} \text{div}
	_{T_x\Sigma _i} V(x)\theta _i d \mu_{\Sigma _i}(x) \\
	={}& \sum_{i=1}^{q} -\int_{\Sigma _i \cap U} V \cdot \boldsymbol{H}_{
	i} \theta_i d\mu_{\Sigma _i}+\sum_{i=1}^{q} \int_{\Gamma} V\cdot \tau_i
	\theta _i d\mu_\Gamma
\end{align*}
where $\text{div} _PV=D _{e_1}V \cdot e_1+ D _{e_2}V \cdot e_2$ for any
orthonormal basis $e_1,e_2$ of the plane $P$. 
The $\mu _{\Sigma _i}$ is the area
measure on $\Sigma _i$. $\|V_M\|$ is the weight measure of $V_M$. 

We say $M$ is \textit{stationary} in $U$ if for any such variation, we have
$\frac{\text{d}}{\text{d}t}\left|\varphi_t(U)\right|=0$.

Note that every $C^k$ vector field on $M$ will give a $C^k$ variation of $M$. So $M$ is stationary in $U$ if and only if $\boldsymbol{H}_{i}
=0$ on each $\Sigma _i\cap U$ and
on $\Gamma\cap U$, we have
\[
	\sum_{i=1}^{q} \theta _i \tau_i=0.
\]
This is precisely the condition that we define the minimal multiple junction
surface.

Now we can consider the second variation of area for minimal multiple junction
surface. 
\begin{definition}
	We say a minimal triple junction surface $M$ is \textit{stable} in $U$ whose
	closure is compact
if for every variation $\varphi _t$ of $M$ in $U$, we have
\[
	\left.\frac{\text{d}^{2}}{\text{d}t^{2}}
		\right|_{t=0}\left|\varphi _t(M\cap U)\right|\ge 0.
\]
	\label{def:stable}
\end{definition}
So we say $M$ is \textit{stable} if for every $U$ with compact closure, we
always have $\left.\frac{\text{d}^{2}}{\text{d}t^{2}}\right|_{t=0}
	\left|\varphi _t(M\cap
U)\right|\ge 0$ for any variation $\varphi _t$ in $U$.

%
%
The remaining part of this section is to deduce the stability operator
(\ref{eq:Stability}).

\begin{theorem}
	If $M$ is a stable complete minimal multiple junction surface in 
	$\mathbb{R}^3 $. Then for any $\phi \in C^k(M)$ satisfying 
	(\ref{eq:ckcompactible_condition}) with compact support, we have	
	\begin{equation}
		\label{eq:stability_inequality}
			\sum_{i=1}^{q}\int_{\Sigma _i} \left( \left|\nabla_{\Sigma _i}\phi
			_i\right|^2-\left|A_i\right|^2\phi _i^2 \right)\theta _i d\mu_{\Sigma _i} -\int_{\Gamma} \phi _i^2 {\boldsymbol{H}}_\Gamma
		\cdot \tau_i \theta _id\mu_\Gamma\ge 0
	\end{equation}
where, $\nabla_{\Sigma _i}\phi _i$ denotes the gradient on $\Sigma _i$ and
$\left|A_i\right|$ denotes the norm of the second fundamental form on $\Sigma
_i$. $\boldsymbol{H}_\Gamma$ means the curvature vector of the curve $\Gamma$. 
	\label{thm:stablility}
	Moreover, it holds even for $\phi  \in  H^1(M)$ satisfying 
	(\ref{eq:compactible_condition}) with compact support. 
\end{theorem}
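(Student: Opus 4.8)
The plan is to realize the left-hand side of (\ref{eq:stability_inequality}) as the second variation $\left.\tfrac{d^2}{dt^2}\right|_{t=0}|\varphi_t(M\cap U)|$ of a carefully chosen admissible variation whose normal speed on $\Sigma_i$ equals $\phi_i$, and then invoke stability to conclude the expression is $\ge 0$. First I would produce such a variation: given $\phi\in C^k(M)$ satisfying (\ref{eq:ckcompactible_condition}) with associated field $W\in C^k(\Gamma,T\mathbb{R}^3)$, extend as in Section 2 to a $C^k$ vector field $V$ on $M$ with $V\cdot\nu_i=\phi_i$ and $V|_\Gamma=W$ common to all sheets, and let $\varphi_t$ be its flow. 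The key structural input is that $V$ restricts to the single field $W$ along $\Gamma$, so that $\Gamma_t$ is a well-defined common boundary of all the $\Sigma_{it}$.

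Rather than expanding the area to second order directly, I would differentiate the first variation formula at general $t$,
\[
A'(t)=-\sum_{i=1}^q\theta_i\int_{\Sigma_{it}}\boldsymbol{H}_{it}\cdot V\,d\mu_{it}+\int_{\Gamma_t}W\cdot\Big(\sum_{i=1}^q\theta_i\tau_{it}\Big)\,d\mu_{\Gamma_t},
\]
where I have already used that $V|_{\Gamma_t}$ is common. Two features of a minimal multiple junction surface then collapse most terms at $t=0$. In the interior, $\boldsymbol{H}_{i0}\equiv 0$ forces every term in which the $t$-derivative does not fall on $\boldsymbol{H}_{it}$ to vanish, and since $\boldsymbol{H}_{i0}\equiv 0$ its tangential derivatives vanish too, so only the normal linearization $L\phi_i=\Delta_{\Sigma_i}\phi_i+|A_i|^2\phi_i$ survives; integrating by parts produces $\int_{\Sigma_i}(|\nabla_{\Sigma_i}\phi_i|^2-|A_i|^2\phi_i^2)-\int_\Gamma\phi_i\,\partial_{\tau_i}\phi_i$. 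Along $\Gamma$, the balancing identity $\sum_i\theta_i\tau_{i0}=0$ from Definition \ref{def:minimal} makes the product rule leave only $\int_\Gamma W\cdot\sum_i\theta_i\dot\tau_i$ (where $\dot{\,}=\partial_t|_{t=0}$); in particular the acceleration $\partial_t V$ and the variation of $d\mu_{\Gamma_t}$ drop out entirely. This is the conceptual heart of the computation: it shows the second variation is insensitive to the second-order data of the variation, which is exactly what makes Definition \ref{def:stable} meaningful.

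The remaining work is the boundary bookkeeping. I would differentiate the orthonormality relations defining the adapted frame $\{T,\tau_i,\nu_i\}$ along $\Gamma$ to write $\dot\tau_i=-(\tau_i\cdot\dot T)T-(\tau_i\cdot\dot\nu_i)\nu_i$, insert $\dot T=(\partial_s W)^{\perp T}$ and the normal-evolution identity $\tau_i\cdot\dot\nu_i=-\partial_{\tau_i}\phi_i-A_i(\tau_i,W^\top)$, and combine the result with the interior term $-\phi_i\,\partial_{\tau_i}\phi_i$. The conormal-derivative pieces cancel, and all contributions carrying a factor common to every sheet (those built from $\partial_s W$ and the common tangent $T$) are annihilated upon summing against $\sum_i\theta_i\tau_i=0$ and its arclength derivative. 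Using minimality in the form $A_i(T,T)+A_i(\tau_i,\tau_i)=0$ together with the decomposition $\boldsymbol{H}_\Gamma=(\boldsymbol{H}_\Gamma\cdot\tau_i)\tau_i+(\boldsymbol{H}_\Gamma\cdot\nu_i)\nu_i$ of the curvature vector of $\Gamma$, the surviving terms should reorganize (after one integration by parts along $\Gamma$) into exactly $-\sum_i\theta_i\int_\Gamma\phi_i^2\,\boldsymbol{H}_\Gamma\cdot\tau_i$, which yields (\ref{eq:stability_inequality}). I expect this cancellation to be the main obstacle: tracking the moving frame and verifying that every term not proportional to $\phi_i^2\,\boldsymbol{H}_\Gamma\cdot\tau_i$ is removed by the balancing condition is where the calculation is most delicate and error-prone.

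Finally, I would upgrade from $C^k$ to $\phi\in H^1(M)$ satisfying (\ref{eq:compactible_condition}) by density. Mollifying the common field $W\in L^2_{\mathrm{loc}}(\Gamma)$ and correcting in the interior produces smooth compatible $\phi^{(n)}\to\phi$ in $H^1(M)$ with uniformly compact support; by the Trace Theorem the boundary integrals $\int_\Gamma\phi_i^2\,\boldsymbol{H}_\Gamma\cdot\tau_i$ converge (the trace $H^1(\Sigma_i)\to L^2(\Gamma)$ is continuous and $\boldsymbol{H}_\Gamma$ is bounded on the compact $\Gamma$), while the interior Jacobi forms converge, so the inequality passes to the limit. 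The point needing care here is that the approximation must preserve the compatible condition, i.e. the approximants' boundary traces must remain of the form $W^{(n)}\cdot\nu_i$ for a single field $W^{(n)}$ along $\Gamma$.
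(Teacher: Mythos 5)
Your proposal is correct and follows essentially the same route as the paper's proof: extend $\phi$ to a vector field $V$ with $V\cdot\nu_i=\phi_i$, differentiate the first variation formula, use $\boldsymbol{H}_i=0$ and $\sum_i\theta_i\tau_i=0$ to kill the terms where the derivative does not hit $\boldsymbol{H}_{it}$ or $\tau_{it}$, compute $\dot\tau_i\cdot V$ by differentiating the orthonormal frame relations, and reduce the boundary term to $-\sum_i\theta_i\int_\Gamma\phi_i^2\,\boldsymbol{H}_\Gamma\cdot\tau_i$ using minimality of each $\Sigma_i$ together with the balancing condition and its arclength derivative, finishing with the same density argument for $\phi\in H^1(M)$. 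The ingredients you flag as delicate (the cancellation of the conormal-derivative pieces against the integration-by-parts boundary term, and the annihilation of the common-factor terms by $\sum_i\theta_i\tau_i=0$) are exactly the cancellations carried out in the paper's computation.
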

\begin{proof}
	Let $\phi \in C^k(M)$. Since it satisfies compatible condition, we can find
	$V\in C^k(M,T\mathbb{R}^3 )$ such that $\phi _i=V\cdot \nu_i$. So there is
	a variation $\varphi _t
	$ with compact support associated with vector field $V $, i.e.
	$V=\left.\frac{\text{d}}{\text{d}t}\right|_{t=0}\varphi _t $

	Suppose $\varphi_t  $ is supported in $U $. So by the first variation
	formula, we have
	\[
		\frac{\text{d}}{\text{d}t}\left|\varphi _t(M \cap U)\right|
		=\sum_{i=1}^{q} -\int_{\Sigma _{it}} V_t\cdot \nu_{it} H_{\Sigma _{it}} 
		\theta _i d\mu _{\Sigma _{it}}+\sum_{i=1}^{q} \int_{\Gamma_t} V_t\cdot
		\tau_{it} \theta _i d \mu_{\Gamma_t}
	\]
	where $H_{\Sigma _{it}}=\boldsymbol{H}_{\Sigma _{it}}\cdot \nu_{it} $
	and $V_t=\frac{\text{d}}{\text{d}t}\varphi _t $.
	
	So after taking
	derivative with respect to $t $ on the first variation formula, we
	have
	\begin{align}
		\left.\frac{\text{d}^{2}}{\text{d}t^{2}}\right|_{t=0}
			\left|\varphi _t(M \cap U)\right|={} & 
			\sum_{i=1}^{q} -\int_{\Sigma _i} V \cdot \nu_i
			\left( \left.\frac{\text{d}}{\text{d}t}\right|_{t=0}H_{\Sigma 
				_{it}} \right) \theta _i d \mu_{\Sigma _i}\nonumber \\
			{} & - \sum_{i=1}^{q} \int_{\Sigma _i} H_{\Sigma _i}\theta _i
			\left.\frac{\text{d}}{\text{d}t}\right|_{t=0}\left( 
				V_t\cdot \nu_{it} d \mu _{\Sigma _{it}}\right) \nonumber \\
		{}& + \sum_{i=1}^{q} \int_{\Gamma} V\cdot 
		\left.\frac{\text{d}}{\text{d}t}\right|_{t=0}\tau_{it}\theta _i
			d \mu_{\Gamma_t}\nonumber \\
		{}&+ \sum_{i=1}^{q} 
			\int_{\Gamma} \theta _i \tau_i\cdot 
			\left.\frac{\text{d}}{\text{d}t}\right|_{t=0}(V_t d\mu_{\Gamma_t})
		\label{eq:pf_second_variation}
	\end{align}

	Note that by stationary condition, we know $H_{\Sigma _i}=0 $ and
	$\sum_{i=1}^{q} \theta _i \tau_i=0 $ along $\Gamma $, so the second
	and forth terms in (\ref{eq:pf_second_variation}) vanish. Moreover, 
	we have the well known formula (cf. \cite{rosenberg1993hypersurfaces})
	\begin{equation}
		\left.\frac{\text{d}}{\text{d}t}\right|_{t=0}H _{\Sigma _{it}}=
			\Delta _{\Sigma _i} \phi _i + \left|A_i\right|^2\phi _i
		\label{eq:pf_second_Mean_Curvature}
	\end{equation}
	So actually, we only need to compute $V\cdot
	\left.\frac{\text{d}}{\text{d}t}\right|_{t=0} \tau_{it}$. For simplicity,
		we use $(\cdot)' $ to denote $\left.\frac{\text{d}}{\text{d}
			t}\right|_{t=0}(\cdot) $.

	Before computing $\tau_i' $, we need to get $\nu_i' $. Let $e_1,e_2 $
	be the orthonormal frame of $T_x\Sigma _i $
	for some $x\in \Sigma _i $. Let $e_{it}=d \varphi _t(e_i) $. 

	We can decompose $V=\phi_i \nu_i+W_i $ on $\Sigma _i $ with $W_i $ tangential
	to $\Sigma _i $.
	Note that $[e_{it},V_t]=0 $, we have
	\begin{align*}
		\nu_i'\cdot e_i={} & -\nu_i\cdot e_i'=-\nu_i \cdot
		D _{e_i}V=-e_i(\phi_i )- \nu_i \cdot D _{e_i}W\\
		={} & - e_i (\phi_i )- A _{\Sigma _i}(e_i, W)
	\end{align*}

	Suppose $\eta $ is the unit tangential vector field on $\Gamma $ and
	also write $\eta_t=d\varphi_t (\eta) $. This time we decompose $V $ as
	$V=\phi_i \nu_i+f_i \tau_i+g \eta $ where $f_i=V\cdot \tau_i $, 
	$g=V\cdot \eta $. Similarly with $\nu'_i $, we have
	\begin{align*}
		\eta'\cdot \tau_i={} & \tau_i \cdot D _{\eta}V= \phi _i
		\tau_i \cdot D _{\eta}\nu_i+\eta(f_i)+g \tau_i \cdot D_\eta \eta\\
		={} & - \phi _i A _{\Sigma _i}(\tau_i, \eta)+ \eta(f_i)
		+ g \boldsymbol{H}_\Gamma \cdot \tau_i
	\end{align*}
	
	Hence, 
	\begin{align*}
		\tau_i' \cdot V={} & (\tau_i' \cdot \nu_i) \phi _i +
		(\tau_i' \cdot \eta) g = -(\tau_i \cdot \nu_i')\phi _i-(
		\tau_i \cdot \eta')g\\
		={} & \tau_i(\phi _i)\phi _i +\phi _i A _{\Sigma _i}(\tau_i, W)
		+\phi _i g A _{\Sigma _i}(\tau_i ,\eta) - g \eta(f_i)
		-g^2 \boldsymbol{H}_\Gamma \cdot \tau_i\\
		={} & \tau_i(\phi _i)\phi _i + \phi _i f_i A_i(\tau_i, \tau_i)
		+2\phi _i g A_i(\tau_i, \eta)-g \eta(f_i)-g^2 
		\boldsymbol{H}_\Gamma \cdot \tau_i
	\end{align*}
	
	Note that we can take $V $ which is a normal vector field on
	when restricting on $\Gamma $. In this case, we have $g=0 $ on $\Gamma $
	, so we get
	\begin{align}
		\sum_{i=1}^{q} \theta _i\tau_i'\cdot V ={} & 
		\sum_{i=1}^{q} \theta _i[\tau_i(\phi _i)\phi _i +\phi _i f_i A_i(\tau_i,
		\tau_i)]
		\label{eq:pf_second_tauV}
	\end{align}
	
	Beside, if we do not assume $V $ is normal to $\Gamma$, we can still
	get (\ref{eq:pf_second_tauV}) by noting the following identity
	\[
		\phi _i A_i(\tau_i, \eta)= \phi _i D _{\eta}\tau_i\cdot \nu_i=
		D _{\eta}\tau_i \cdot V- D _\eta \tau_i \cdot (f_i \tau_i)
		- D_\eta \tau_i \cdot (g \eta)
	\]
	
	So after taking the sum over $i $ with density $\theta _i $, we can use minimal
	condition $\sum_{i=1}^{q} \tau_i \theta _i=0 $ to get same result as
	(\ref{eq:pf_second_tauV}).

	To further processed, we use each $\Sigma _i $ is a minimal surface and
	get
	\begin{align*}
		\phi _i f_i A_i(\tau_i,\tau_i)={} & -\phi _i f_i A_i(\eta,\eta)
		=- f_i D_\eta \eta \cdot (\phi _i \nu_i)\\
		={} & -f_i \boldsymbol{H}_\Gamma \cdot V + f_i^2 
		\boldsymbol{H}_\Gamma \cdot \tau_i\\
		={} & - f_i \boldsymbol{H}_\Gamma \cdot V + 
		(\left|V\right|^2-\phi _i^2-g^2)
		\boldsymbol{H}_\Gamma \cdot \tau_i
	\end{align*}
	After taking the sum of $ i$, we have
	\[
		\sum_{i=1}^{q} \theta _i\phi _i f_iA_i(\tau_i,\tau_i)=
		0+0-\sum_{i=1}^{q} \theta _i
		\phi _i^2 \boldsymbol{H}_\Gamma \cdot \tau_i -0
		=\sum_{i=1}^{q}\theta _i \phi _i^2 \boldsymbol{H}_\Gamma\cdot \tau_i
	\]
	
	Hence, combining (\ref{eq:pf_second_variation}), 
	(\ref{eq:pf_second_Mean_Curvature}),
	(\ref{eq:pf_second_tauV}), we get the second variation formula as
	\begin{align*}
		&\left.\frac{\text{d}^{2}}{\text{d}t^{2}}\right|_{t=0}
			\left|\varphi _t(M \cap U)\right|\\
		={}&
			\sum_{i=1}^{q} \int_{\Sigma _i} \left( -\phi_i \Delta _{\Sigma _i}
		\phi _i - \left|A_i\right|^2\phi _i^2 \right) \theta _i d\mu
		_{\Sigma _i}+ \sum_{i=1}^{q}\int_{\Gamma} \left( 
		\tau_i(\phi _i)\phi _i-\phi _i^2 \boldsymbol{H}_\Gamma \cdot
	\tau_i\right)\theta _i d\mu_\Gamma\\
		={}& \sum_{i=1}^{q} \left( \int_{\Sigma _i} \left( \left|
				\nabla _{\Sigma _i}\phi _i\right|^2-
			\left|A_i\right|^2\phi _i^2\right) \theta _i 
			d \mu _{\Sigma _i} - \int_{\Gamma} \phi _i^2 
			\boldsymbol{H}_\Gamma \cdot \tau_i \theta _i d\mu_\Gamma \right) 
	\end{align*}

	So this theorem is followed by the definition of the stable.

 	Note that we can approach the function in $H^1(M)$ locally
 	by smooth functions, and the compatible condition will keep hold in the 
 	trace sense. So the above inequality still holds when $\phi \in
 	H^1(M)$ satisfying (\ref{eq:compactible_condition}) with compact support.
\end{proof}

\begin{remark}
	The proof of Theorem \ref{thm:stablility} can be extended to higher
	dimensional multiple junction hypersurfaces in an arbitrary complete ambient
	manifold $N$ directly. The stability operator will have the form
	\[
		\sum_{i=1}^{q} \int_{\Sigma _i} \left[ \left|\nabla_{\Sigma _i}\phi
		_i\right|^2-\text{Ric}_N(\nu_i)\phi _i^2-\left|A_i\right|^2\phi _i^2
	\right] \theta _i -\int_{\Gamma} \phi _i^2 \left< \boldsymbol{H}_\Gamma, \tau_i \right>
	\theta _i  .
	\]
\end{remark}

\section{Functions with finite orders}
Before giving the proof of $L^p $ estimate, we need to consider some
special functional spaces on $M $ containing $\left|A_i\right| $ and 
test functions we're interested in. Specifically, at least we want
to show $\tau_i \left( \log \left|A_i\right| \right) 
\left|A_i\right|^{2p-2}  $
are locally integrable for each $p>1 $.

Let's fix a surface $\Sigma \hookrightarrow \mathbb{R}^3 
$ with smooth boundary $\Gamma $. We will assume $0<\alpha<\infty $.

\begin{definition}
	We say a non-negative function $g(x)$ on $\Sigma  $ has 
	\textit{smooth order $\alpha $ near $x_0 $} if there is a 
	conformal coordinate chart $\varphi (z):V\rightarrow U \subset \Sigma  $
	with $0 \in V\subset \mathbb{C} $ and $\varphi (0)=x_0 $ such that
	$g(z):=g(\varphi (z)) $ has form $$g(z)=h(z)\left|z\right|^\alpha $$
	where $h(z) $ is positive and smooth in $V $.
\end{definition}

Here, the conformal coordinate chart is the coordinate chart that
metric near $x_0 $ has form $\lambda^2(z) \left|dz\right|^2 $. 
We allow $x_0 $ to be on $\Gamma $ so that $V $ is a domain with smooth 
boundary in $\mathbb{C} $.

\begin{definition}
	We call a non-negative function $g $ has \textit{smooth finite
	order} on $\Sigma  $, if there exists a discrete subset  
	$P \subset \Sigma  $ such that $g $ is smooth and positive
	on $\Sigma \backslash P $ and $g $ has smooth order 
	$\alpha_x $ near $x $ for each $x\in P $. We write this 
	function space as $\tilde{C} _+(\Sigma ) $ for conveniences.
	\label{def:Finite_order_function}
\end{definition}

Similarly, we say a non-negative function $g(t) $ on $\Gamma$ has 
\textit{smooth order $\alpha $ near $x_0 $} if $g(t) $ can be written as
$g(t)=\left|t\right|^\alpha h(t)$ for some smooth positive
function $h(t) $ under some arc length parametrization with
$g(0)= x_0$. So we can define \textit{smooth finite order}
function space $\tilde{C} _+(\Gamma)$ on $\Gamma $ which
contains the functions smooth outside a discrete set $P $
and has smooth order near each point of $P $.

We have the following lemma for the relation of these two
spaces.

\begin{lemma}
	Let $\Sigma  $ be a two dimensional analytic Riemannian
	manifold with smooth boundary $\partial \Sigma  $. 
    For any
	$\tilde{g} \in \tilde{C} _+(\Gamma) $, there is an
	extension of $\tilde{g}  $ denoted by $g $ such that
	$g\in \tilde{C} _+(\Sigma ) $. Moreover, we can require
	$g $ is positive on $\Sigma \backslash \partial \Sigma  $.

	Conversely, for any $g\in \tilde{C} _+(\partial \Sigma ) $,
	the restriction of $g $ on $\Gamma $ lies in 
	$\tilde{C} _+(\Gamma) $. 
	\label{lem:Extension_finite_order}
\end{lemma}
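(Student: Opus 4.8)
The plan is to treat the two directions separately, since the converse (restriction) direction is essentially a computation in a single chart while the extension direction requires a genuine construction. The technical backbone for both is one elementary observation: if $t\mapsto z(t)$ is a regular smooth curve in $\mathbb{C}$ with $z(0)=0$, then by Hadamard's lemma $z(t)=t\psi(t)$ with $\psi$ smooth and $\psi(0)=z'(0)\neq0$, so $|z(t)|=|t|\,|\psi(t)|$ where $|\psi(t)|=\sqrt{\psi(t)\overline{\psi(t)}}$ is smooth and positive near $0$; hence $|z(t)|^{\alpha}=|t|^{\alpha}\cdot(\text{smooth positive})$. Thus the radial weight $|z|$ in a conformal chart and the arc-length weight $|t|$ on $\Gamma$ differ only by a smooth positive factor along $\Gamma$, so the exponent $\alpha$ is the \emph{same} in the surface and boundary notions of smooth order. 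I will also use that $\Gamma=\partial\Sigma$ is closed in $\Sigma$, so a set discrete in $\Gamma$ is discrete in $\Sigma$, and that (by analyticity of the metric) every boundary point carries a conformal chart $\varphi\colon V\to\Sigma$ onto a plane domain $V$ with smooth boundary and $\varphi(0)=x_0$.

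For the restriction direction, let $g\in\tilde{C}_+(\Sigma)$ with singular set $P\subset\Sigma$. At a point $x\in\Gamma\setminus P$ the function $g$ is smooth and positive near $x$, so its restriction to $\Gamma$ is smooth and positive there. At a point $x\in\Gamma\cap P$, take a conformal chart with $g(\varphi(z))=h(z)|z|^{\alpha}$, $h$ smooth positive, and parametrize $\Gamma$ near $x$ by arc length $t\mapsto z(t)$ with $z(0)=0$. By the observation above, $g(z(t))=h(z(t))\,|z(t)|^{\alpha}=|t|^{\alpha}\cdot(\text{smooth positive})$, since $h\circ z$ is smooth positive; hence $g|_{\Gamma}$ has smooth order $\alpha$ at $x$. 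As the singular set of $g|_{\Gamma}$ lies in $P\cap\Gamma$, discrete in $\Gamma$, we get $g|_{\Gamma}\in\tilde{C}_+(\Gamma)$.

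For the extension direction, let $\tilde{g}\in\tilde{C}_+(\Gamma)$ with singular set $P=\{x_k\}\subset\Gamma$. Near each $x_k$ choose a conformal chart $\varphi_k\colon V_k\to\Sigma$ with $\varphi_k(0)=x_k$, so $\Gamma$ corresponds to the smooth boundary curve of $V_k$ through $0$. Dividing the boundary datum by $|z|^{\alpha_k}$ and using the observation gives $\tilde{g}=|z|^{\alpha_k}\phi(z)$ along $\partial V_k$ with $\phi$ smooth positive; I extend $\phi$ to a smooth function $\Phi$ on $V_k$, positive after shrinking, and set $g\circ\varphi_k=|z|^{\alpha_k}\Phi$, which has smooth order $\alpha_k$ at $x_k$ and restricts to $\tilde{g}$. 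Away from $P$, where $\tilde{g}$ is smooth positive, I extend it to a smooth positive function on a collar of $\Gamma$. I then glue all local pieces by a convex combination subordinate to a partition of unity: in every overlap the pieces are smooth and positive — the factor $|z|^{\alpha_k}$ is genuinely singular only at the isolated point $x_k$, where the corresponding cutoff equals $1$ — so the result is smooth and positive off $P$, equals the singular model at each $x_k$, and restricts to $\tilde{g}$ on $\Gamma$ because every piece does. Gluing once more with a positive constant over an interior overlap yields $g\in\tilde{C}_+(\Sigma)$, positive on $\Sigma\setminus\partial\Sigma$, with $g|_{\Gamma}=\tilde{g}$ and singular set $P$ (discrete in $\Sigma$).

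The step I expect to be the main obstacle is not the gluing but securing the correct \emph{conformal} charts at boundary points: one must know that an analytic surface-with-(smooth)-boundary admits, near each boundary point, a conformal coordinate onto a plane domain with smooth boundary, and one must verify that the exponent in the surface-order definition coincides with the arc-length exponent on $\Gamma$. Both reduce to the regular-curve computation above together with the classical existence of isothermal coordinates: using that the metric is analytic, the chart is obtained from an analytic isothermal chart on a slightly larger ambient analytic manifold and restricted to $\Sigma$, the image of the smooth curve $\Gamma$ being a smooth boundary arc. Once this is in place the remainder is a routine partition-of-unity argument.
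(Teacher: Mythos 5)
Your proposal is correct and takes essentially the same route as the paper: the same key computation that $\left|z(t)\right|=\left|t\right|\cdot(\text{smooth positive})$ along the boundary (the paper's function $f(t)=\left|t\right|/\left|\varphi^{-1}\circ\gamma(t)\right|$, your Hadamard factorization), the same local extension $\left|z\right|^{\alpha}\cdot(\text{smooth positive extension of the quotient})$ near each zero, and the same partition-of-unity gluing with positivity kept away from the singular points. The only notable difference is that you write out the restriction direction explicitly, which the paper dismisses as ``essentially similar.''
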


\begin{proof}
		Let's write $P=\{x\in \Gamma: \tilde{g} (x)=0\}. 
		$ We can just focus on the extension near each $x\in P $ since $\partial 
		\Sigma $ is smooth on $\Sigma  $ and positive smooth function can
		be easily extended from $\partial \Sigma  $ to $\Sigma  $ locally and
		keep positivity. Then
		we can use partition of unity to get a global extension.

		Fix $x_0 \in P $, we choose a conformal coordinate $
		\varphi (z):V\rightarrow 
		U$ where $V, U $ are all homeomorphic to a half disk
		such that $\varphi (0)=x_0 $. WOLG, we assume
		the metric has form $\lambda^2(z)dzd\overline{z} $ 
		with $\lambda(0)=1 $ in this chart.
		Moreover, we can assume $U $ is small enough such that $\partial 
		\Sigma \cap U$ has an arc length parametrization $\gamma(t):
		(a,b)\rightarrow \Gamma$ with $\gamma(0)=x_0 $.
		
		Let's us consider the function $f (t):=\frac{\left|t
		\right|}{\left|\varphi ^{-1}
	\circ \gamma(t)\right|} $ defined in $(-\varepsilon ',
		\varepsilon'	)\backslash 0 $ for some small $\varepsilon ' $ where $\left|z
		\right|$ is the usual absolute value in $\mathbb{C} $ with respect 
		to this coordinate chart. 
		We want to show that, by define $f 
		(0)=1$, we can get a smooth function $f$ on $(-\varepsilon '',
		\varepsilon'') $. 
		
		Note that the map $\psi(t):=\varphi ^{-1}\circ \gamma(t) $	
		is smooth from $(-\varepsilon ',\varepsilon ') $ to $V\subset 
		\mathbb{C}$ with $\psi(0)=0 $, so we can expand $\psi(t) $ as
		$\psi(t)= \psi'(0)t+\psi_1(t)t^2$ for some smooth map $\psi _1 $
		near 0.

		So we have
		\[
			f(t)=\frac{\left|t\right|}{\left|\psi(t)\right|}
			=\frac{1}{\left|\psi'(0)+\psi_1(t)t\right|}=\frac{1}{
				\left|1+t \frac{\psi_1(t)}{\psi'(0)}\right|
			}	
		\]
		is smooth in $(-\varepsilon '',\varepsilon '') $ for some small
		$\varepsilon '' $ since $\left|\psi'(0)\right|=1$.

		Based on definition of $g $, we can write $g(t)=\left|t\right|
		^{\alpha}h(t)$ for some smooth function $h(t) $ near 0. 

		So since $h(t)f(t)^{\alpha} $ is smooth and positive near 0 in 
		$\Gamma $, we
		can extend it smoothly to a neighborhood of $x_0\in \Sigma  $.
		We denote this extension function as 
		$\tilde{h}(z)  $. Then we define $\tilde{g} = \left|z\right|^{\alpha}
		\tilde{h}$ near $x$. This is a local extension of $g $ near $x$ which
		is positive except at the point $x$ since
		on $\partial \Sigma  $, we have
		\[
			\tilde{g} (\gamma(t))=\left|\varphi ^{-1}
			\circ \gamma(t)\right|^{\alpha} 
			\left( \frac{\left|t\right|}{\left|
			\varphi ^{-1}\circ \gamma(t)\right|} \right) ^{\alpha}
			h(t)=h(t)\left|t\right|^{\alpha}.
		\]
		So by partition of unity, we can get a extension of $\tilde{g}  $ as 
		we want. Moreover, we can keep $\tilde{g}  $ positive on $\Sigma 
		\backslash \partial \Sigma $. 

		Another part is essentially similar to this case. 
\end{proof}

So for our multiple junction surface $M =(\theta _1\Sigma _1,
\cdots , \theta _q\Sigma _q;\Gamma)$, the norm of second fundamental
form $\left|A_i\right| $ will belong to $\tilde{C} _+(\Sigma _i) $
if $\Sigma _i $ is non-flat. This is because when $\Sigma _i $ is minimal,
Gauss map $\nu_i(x):\Sigma _i\rightarrow \mathbb{S}^2 $ will be the 
holomorphic map. So $\left|A_i\right|^2= \left|d \nu_i\right|^2 $. Hence
$\left|A_i\right|  $ will have form $\left|z\right|^k f(z) $
near each zeros of $\left|A_i\right| $ with $f(z) $ positive and smooth
for some $k \in \mathbb{Z}_+ $ in some conformal coordinate.
Hence,
\[
	|\tau(\log \left|A_i\right|)\left|A_i\right|^{2p-2}|
	\le C_1 \left|z\right|^{2p-2} +C_2 \left|z\right|^{2p-3}
\]
where $C_i $ only depends on $f $. Since 
$\int_{-\varepsilon } ^\varepsilon \left|t\right|^{2p-2}dt, \int_{-\varepsilon} 
^\varepsilon \left|t\right|^{2p-3}$ are all finite, we know 
\[
	\int_{} (p-1) \left|\tau(\log \left|A_i\right|)\right|
	\left|A_i\right|^{2p-2} \left|\phi _i\right|^{2p}d\mu_\Gamma
\]
is locally integrable for each $\phi_i\in L^\infty_{\text{loc} }(\Gamma) $
and $p>1 $.

Moreover, we also have $\left|A\right|^\alpha\in L^\infty _{\text{loc} }
(M) \cap H^1_{\text{loc} }(M)$ by expand the gradient of $\left|A_i\right| $
near its zeros.

\section{$L^p$ estimates for the multiple junction surfaces}

In this section, we will prove the Theorem \ref{LpEstimate}. 

For convenience, we use the following notation. For any $\phi \in
H^1(M)$ with compact support, we write
\[
	\int_{\Sigma } \phi  := \sum_{i=1}^{q} \int_{\Sigma _i}  \phi _i \theta _i d\mu_{\Sigma _i} 
\]
and
\[
	\int_{\Gamma} \phi := \sum_{i=1}^{q} \int_{\Gamma} \phi _i \theta _i d\mu_\Gamma  
\]
for the integration on $M=(\theta _1\Sigma _1, \cdots ,\theta _q
\Sigma _q;\Gamma)$.

So the stability inequality can be written as

\[
	\int_{\Sigma } \left|\nabla _{\Sigma}\phi \right|^2 -\left|A\right|^2\phi
	^2-\int_{\Gamma} \phi ^2 \boldsymbol{H}_\Gamma \cdot\tau \ge 0
\]
for $\phi \in H^1(M)$ with compact support satisfying (\ref{eq:compactible_condition}). 
\begin{theorem}
	Suppose $M=(\theta _1\Sigma _1,\cdots ,\theta _q\Sigma _q;\Gamma)$ is a
	minimal multiple junction surface. Assume $M$ is stable and complete. 
	Let $\phi \in H^1(M)\cap L^\infty(M)$ such that
	$\text{sign}(\phi )\left|A\right|^{p-1}\left|\phi \right|^p$
	satisfies compatible condition (\ref{eq:compactible_condition}). Then
	\begin{align}
		\int_{\Sigma } \left|A\right|^{2p}\left|\phi \right|^{2p}  \le{} & 
		C_1\int_{\Sigma } \left|A\right|^{2p-2}\left|\phi \right|^{2p-2}
		\left|\nabla_{\Sigma }\phi \right|^2\nonumber \\
		{} & +\int_{\Gamma} \left[ \frac{p-1}{2}\left|\tau(\log \left|A\right|
		)\right|-\boldsymbol{H}_\Gamma\cdot\tau \right]  \left|A\right|^{2p-2}
		\left|\phi \right|^{2p}.
		\label{eq:LpEstmate}
	\end{align}

	Moreover, if $\phi \in W^{1,2p}(M) \cap L^\infty(M)$, we also have
	\begin{align}
		\int_{\Sigma } \left|A\right|^{2p}\left|\phi \right|^{2p}\le
		C_1'\int_{\Sigma } \left|\nabla _{\Sigma }\phi \right|^{2p}  
		+C_2' \int_{\Gamma} \left[ (p-1)|\tau(\log
		\left|A\right|)|-\boldsymbol{H}_\Gamma\cdot\tau \right]
		\left|A\right|^{2p-2}\left|\phi \right|^{2p} .
		\label{eq:Lp_Estimate2}
	\end{align}

	Here, we assume $1<p<\frac{5}{4}$, and $C_1,C_1',C_2'$ 
	will only depend on $M$, They do not depend
	on $p$. 
	\label{thm:Lp_Estimate}
\end{theorem}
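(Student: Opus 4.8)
The plan is to run the Schoen--Simon--Yau argument on each sheet $\Sigma_i$ simultaneously, testing the summed stability inequality (\ref{eq:stability_inequality}) with the one function whose compatibility is assumed, and then recycling the unavoidable ``$|\nabla|A||$'' term through a Simons-type identity; the junction geometry enters only through the boundary integrals along $\Gamma$, which is where the $\tau(\log|A|)$ term is born. The analytic input is the two-dimensional Simons inequality for minimal surfaces in $\mathbb{R}^3$: off the zero set of $|A_i|$ one has $|A_i|\Delta_{\Sigma_i}|A_i| \ge |\nabla|A_i||^2 - |A_i|^4$, which follows from Simons' identity together with the refined Kato inequality $|\nabla A_i|^2 \ge 2|\nabla|A_i||^2$ valid in dimension two. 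This is consistent with the (anti)holomorphicity of the Gauss map already exploited in Section 4 to place $|A_i|$ in $\tilde C_+(\Sigma_i)$.

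First I would substitute $\psi_i = \mathrm{sign}(\phi_i)|A_i|^{p-1}|\phi_i|^p$ into (\ref{eq:stability_inequality}). By hypothesis $\psi$ satisfies the compatible condition (\ref{eq:compactible_condition}), and it lies in $H^1(M)$: the factor $|A|^{p-1}$ is locally $H^1$ near the zeros of $|A|$ precisely because $p>1$ and $|A|\in\tilde C_+$. Since $|A|^2\psi^2 = |A|^{2p}|\phi|^{2p}$, the curvature term reproduces the left-hand side of (\ref{eq:LpEstmate}) exactly, while $\int_\Sigma|\nabla\psi|^2$ expands into three pieces: the ``good'' term $p^2\int_\Sigma |A|^{2p-2}|\phi|^{2p-2}|\nabla\phi|^2$, a cross term $2p(p-1)\int_\Sigma|A|^{2p-3}|\phi|^{2p-1}\nabla|A|\cdot\nabla|\phi|$, and the genuinely problematic term $(p-1)^2 J$ with $J:=\int_\Sigma |A|^{2p-4}|\phi|^{2p}|\nabla|A||^2$ (finite for $p>1$ by the order count $|z|^{2k(p-1)-2}$ near a zero of order $k$). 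The boundary term from stability is $-\int_\Gamma |A|^{2p-2}|\phi|^{2p}\boldsymbol{H}_\Gamma\cdot\tau$.

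The heart of the matter is to bound $J$. Multiplying the Simons inequality by the nonnegative weight $|A|^{2p-4}|\phi|^{2p}$, integrating over each $\Sigma_i$, and integrating by parts gives (after summing over $i$ with the weights $\theta_i$) the inequality $(2p-2)J \le \int_\Sigma |A|^{2p}|\phi|^{2p} + \int_\Gamma |A|^{2p-2}|\phi|^{2p}\,\tau(\log|A|) - 2p\int_\Sigma|A|^{2p-3}|\phi|^{2p-1}\nabla|A|\cdot\nabla|\phi|$, where the conormal derivatives $\tau_i(|A_i|)$ on $\partial\Sigma_i\cong\Gamma$ assemble into the stated $\Gamma$-integral. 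Unlike the minimality relation $\sum_i\theta_i\tau_i=0$, these scalar boundary contributions do not cancel, which is exactly why the $\tau(\log|A|)$ term persists; its finiteness is supplied by the finite-order discussion of Section 4. Feeding this bound back into the stability inequality, the coefficient $(p-1)^2/(2p-2)=(p-1)/2$ multiplies the recovered $\int_\Sigma|A|^{2p}|\phi|^{2p}$, so after transposing, the net coefficient on the left is $(3-p)/2$; the two cross terms are absorbed by Young's inequality, each splitting into a small multiple of $J$ (folded back once more through the Simons bound) and a multiple of $\int_\Sigma|A|^{2p-2}|\phi|^{2p-2}|\nabla\phi|^2$. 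The surviving boundary pieces combine into $\int_\Gamma[\tfrac{p-1}{2}\tau(\log|A|)-\boldsymbol{H}_\Gamma\cdot\tau]|A|^{2p-2}|\phi|^{2p}$, and passing to $|\tau(\log|A|)|$ yields (\ref{eq:LpEstmate}). The restriction $1<p<\tfrac54$ is what keeps every absorption coefficient positive with a fixed, $p$-independent choice of the Young parameters, so that $C_1$ does not depend on $p$. For (\ref{eq:Lp_Estimate2}) I would then apply Hölder with exponents $\tfrac{p}{p-1}$ and $p$ to write $\int_\Sigma|A|^{2p-2}|\phi|^{2p-2}|\nabla\phi|^2 \le (\int_\Sigma|A|^{2p}|\phi|^{2p})^{(p-1)/p}(\int_\Sigma|\nabla\phi|^{2p})^{1/p}$ and use Young once more to absorb the first factor into the left-hand side.

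The step I expect to be the main obstacle is rigorously justifying the two integrations by parts in the presence of the zeros of $|A|$, where both $\log|A|$ and the weight $|A|^{2p-4}$ are singular. One must excise geodesic disks of radius $\delta$ about each zero and verify that the circle integrals, of size $\sim\delta^{2k(p-1)}$, vanish as $\delta\to0$, so that no spurious interior boundary term is created and only the $\Gamma$-boundary term survives; this is precisely what the finite-order framework of Section 4 was built to guarantee. The secondary delicate point is the bookkeeping that keeps all Young and absorption constants uniformly bounded on $(1,\tfrac54)$, which is needed for the claim that $C_1$, $C_1'$, $C_2'$ are independent of $p$.
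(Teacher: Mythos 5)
Your proposal follows essentially the same route as the paper's proof: you substitute $\mathrm{sign}(\phi)\left|A\right|^{p-1}\left|\phi\right|^{p}$ into the stability inequality, control the term $(p-1)^2\int_{\Sigma}\left|A\right|^{2p-4}\left|\phi\right|^{2p}\left|\nabla\left|A\right|\right|^2$ by multiplying the two-dimensional Simons identity (the paper uses the exact identity $\left|A\right|\Delta\left|A\right|+\left|A\right|^4=\left|\nabla\left|A\right|\right|^2$, you the equivalent Kato-type inequality) by the weight $\left|A\right|^{2p-4}\left|\phi\right|^{2p}$ and integrating by parts so that the $\tau(\log\left|A\right|)$ boundary term appears, absorb the cross terms by Cauchy--Young for $1<p<\frac{5}{4}$ with $p$-independent constants, and deduce (\ref{eq:Lp_Estimate2}) from the first estimate by one more Young/H\"older step, with the integration-by-parts justification near zeros of $\left|A\right|$ resting on the finite-order framework of Section 4, exactly as in the paper. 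The only point the paper treats that you omit is the degenerate case where some sheet $\Sigma_i$ is flat (so the zeros of $\left|A_i\right|$ are not isolated); the paper disposes of it by noting that $\left|A_i\right|^{p-1}\phi_i$ vanishes identically on flat sheets, so their contributions drop out of every integral.
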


Note that if the $\Sigma _i $ is flat, we can define 
\[
	\int_{\Gamma} \left|\tau_i (\log \left|A_i\right|)\right|
	\left|A_i\right|^{2p-2}\left|\phi _i\right|^{2p}=0
\]
So the right hand side of (\ref{eq:LpEstmate}) will always well-defined as
we want.

\begin{remark}
	Although the $L^p $ estimate (\ref{eq:Lp_Estimate2}) is the one appearing
	the original paper \cite{schoen1975curvature}, we still need the
	slightly stronger version one like (\ref{eq:LpEstmate}) in the 
	later application since the condition $\phi \in W^{1,2p}(M)
	\cap L^\infty(M)$ is not always satisfied based our choice of
	functions.
\end{remark}

\begin{proof}[Proof of Theorem \ref{thm:Lp_Estimate}]
	Let first consider the case that every $\Sigma _i$ is non-flat. This means
	every $\left|A_i\right|$ has only isolated zeros on $\Sigma _i$.

	We suppose $\phi \in L^\infty_{}(M)\cap H^1_{}(M)$ with compact
	support. So by H\"older's inequality, we know $\left|A\right|^{p-1}\phi 
	\in L^\infty_{}(M)\cap H^1_{}(M)$.
	Replacing $\phi $ by $\left|A\right|^{p-1}\phi $ in the stability inequality,
	we have
	\begin{multline}
		\int_{\Sigma } \left|A\right|^{2p}\phi ^2\le (p-1) ^2\int_{\Sigma }
		\left|A\right|^{2p-4}\left|\nabla_{\Sigma }\left|A\right|\right|^2\phi
		^2+\int_{\Sigma } \left|A\right|^{2p-2}\left|\nabla_{\Sigma }\phi
		\right|^2\\
		+2(p-1)\int_{\Sigma } \left|A\right|^{2p-3}\phi \nabla_{\Sigma
		}\left|A\right|\cdot \nabla_{\Sigma }\phi - \int_{\Gamma}
		\left|A\right|^{2p-2}\phi ^2\boldsymbol{H}_\Gamma\cdot \tau  .
		\label{eq:proof_Lp1}
	\end{multline}
	
	Note that the compatible condition for $\phi $ is the condition that
	$\left|A\right|^{p-1}\phi$ satisfies (\ref{eq:compactible_condition}). 

	On the minimal surface $\Sigma _i$, we have Simon's identity (see
	\cite{colding2011course} for example), we have
	\[
		\left|A_i\right|\Delta _{\Sigma
		_i}\left|A_i\right|+\left|A_i\right|^4= \left|\nabla _{\Sigma
	_i}\left|A_i\right|\right|^2
	\]
	where $\Delta _{\Sigma _i}$ is the Laplacian operator on $\Sigma _i$. 
	
	Multiplying $\left|A_i\right|^{2p-4}\phi_i ^2$ to the both side of Simon's identity
	and integrating by part, we have
	\begin{multline}
		\int_{\Sigma } \left|A\right|^{2p-4}\left|\nabla_{\Sigma
		}\left|A\right|\right|^2\phi ^2=\int_{\Sigma }
	\left|A\right|^{2p}\phi ^2-(2p-3) \int_{\Sigma }
	\left|A\right|^{2p-4}\left|\nabla_{\Sigma }\left|A\right|\right|^2\phi
	^2\\
	-2\int_{\Sigma } \left|A\right|^{2p-3}\phi  \nabla_{\Sigma }\phi
	\cdot \nabla_{\Sigma }\left|A\right|+\int_{\Gamma}
	\left|A\right|^{2p-3}\tau(\left|A\right|)\phi ^2  .
	\end{multline}

	Note that all the terms are finite in the above identity by the properties of
	$\left|A_i\right| $.
	
	Moving the second term in the right hand side of the above identity, we have
	\begin{align}
		2(p-1)\int_{\Sigma } \left|A\right|^{2p-4}\left|\nabla_{\Sigma
		}\left|A\right|\right|^2\phi ^2={}&\int_{\Sigma } \left|A\right|^{2p}\phi
		^2-2\int_{\Sigma } \left|A\right|^{2p-3}\phi \nabla_{\Sigma }\phi
		\cdot\nabla_{\Sigma }\left|A\right|\nonumber \\
		&+\int_{\Gamma} \left|A\right|^{2p-3}\tau(\left|A\right|)\phi ^2.
		\label{eq:Lp2}
	\end{align}
	Substituting (\ref{eq:Lp2}) in (\ref{eq:proof_Lp1}) and using Cauchy
	inequality, we get
	\begin{align}
		\int_{\Sigma } \left|A\right|^{2p}\phi ^2  \le{} &
		\frac{p-1}{2}\int_{\Sigma }\left|A\right|^{2p}\phi ^2 -(p-1)\int_{\Sigma }
		\left|A\right|^{2p-3}\phi \nabla_{\Sigma }\phi \cdot\nabla_{\Sigma
		}\left|A\right|   \nonumber \\
		{} & +\frac{p-1}{2}\int_{\Gamma}
		\left|A\right|^{2p-3}\tau(\left|A\right|)\phi ^2 + \int_{\Sigma }
		\left|A\right|^{2p-2}\left|\nabla_{\Sigma }\phi \right|^2\nonumber \\
		   &+2(p-1)\int_{\Sigma } \left|A\right|^{2p-3}\phi \nabla_{\Sigma }\phi
		   \cdot \nabla_{\Sigma }\left|A\right|-\int_{\Gamma} \phi
		   ^2\left|A\right|^{2p-2}\boldsymbol{H}_\Gamma\cdot\tau\nonumber \\
		={}& \frac{p-1}{2}\int_{\Sigma } \left|A\right|^{2p}\phi
		^2+(p-1)\int_{\Sigma } \left|A\right|^{2p-3}\phi  \nabla_{\Sigma }\phi
		\cdot \nabla_{\Sigma }\left|A\right|\nonumber \\
		   &+\int_{\Sigma } \left|A\right|^{2p-2}\left|\nabla_{\Sigma }\phi
		   \right|^2+\frac{p-1}{2}\int_{\Gamma}
		   \left|A\right|^{2p-2}\tau(\log(\left|A\right|))\phi ^2\nonumber \\
		   &-\int_{\Gamma}\phi ^2\left|A\right|^{2p-2}\boldsymbol{H}_\Gamma\cdot\tau\nonumber
		   \\
		\le{}&(p-1)\int_{\Sigma } \left|A\right|^{2p}\phi ^2 +\left(
		1+\frac{p-1}{2} \right) \int_{\Sigma }
		\left|A\right|^{2p-2}\left|\nabla_{\Sigma }\phi \right|^2\nonumber \\
			 &+\frac{p-1}{2}\int_{\Gamma} \left|A\right|^{2p-2}\phi ^2
			 \tau(\log(\left|A\right|))-\int_{\Gamma} \left|A\right|^{2p-2}\phi
			 ^2\boldsymbol{H}_\Gamma\cdot\tau.
	\end{align}
	
	So for $1<p<\frac{3}{2}$, we have
	\begin{align}
		\int_{\Sigma } \left|A\right|^{2p}\phi ^2  \le{} & 3\int_{\Sigma }
		\left|A\right|^{2p-2}\left|\nabla_{\Sigma }\phi \right|^2\nonumber   \\
		 {} & +\frac{p-1}{2}\int_{\Gamma} \left|A\right|^{2p-2}\phi ^2
			 \tau(\log(\left|A\right|))-\int_{\Gamma} \left|A\right|^{2p-2}\phi
			 ^2\boldsymbol{H}_\Gamma\cdot\tau.
			 \label{eq:proof_Lp3}
	\end{align}

	This is exactly the first inequality (\ref{eq:LpEstmate})
	we want to proof.

	Now let's change to the case $\phi \in W^{1,p}(M)\cap L^\infty(M) $.
	Recall the Young's inequality that for any $x,y>0$, $a,b >1$ with
	$\frac{1}{a}+\frac{1}{b}=1$, we have
	\[
		xy\le \frac{x^a}{a}+\frac{y^b}{b}.
	\]
	We choose $a=\frac{p}{p-1},b=p$, then we have
	\[
		\left|A\right|^{2p-2}\left|\phi \right|^{2p-2}\left|\nabla_{\Sigma }\phi
		\right|^2\le \frac{p-1}{p}\left|A\right|^{2p}\left|\phi
		\right|^{2p}+\frac{1}{p}\left|\nabla _{\Sigma }\phi \right|^{2p}.
	\]
	So after replacing $\phi $ by $\text{sign}(\phi )\left|\phi \right|^p$ in
	(\ref{eq:proof_Lp1}), we have
	\begin{align}
		&\left( 1-\frac{3(p-1)}{p} \right) \int_{\Sigma }
		\left|A\right|^{2p}\left|\phi \right|^{2p}\nonumber \\
		 \le{} & \frac{3}{p}\int_{\Sigma } \left|\nabla_{\Sigma }\phi
		 \right|^{2p}+\frac{p-1}{2}\int_{\Gamma} \left|A\right|^{2p-2}\left|\phi
		 \right|^{2p}\tau(\log(\left|A\right|))-\int_{\Gamma}
		 \left|A\right|^{2p-2}\left|\phi \right|^{2p}\boldsymbol{H}_\Gamma\cdot\tau  .
	\end{align}

	This time, we've assumed $\phi \in W^{1,p}(M)\cap L^\infty(M)$ with
	compact support and the
	compatible condition for $\phi$ is $\text{sign}(\phi )\left|A\right|^{p-1}
	\left|\phi \right|^{p}$ will satisfy (\ref{eq:compactible_condition}). Note
	that $\text{sign}(\phi )\left|\phi \right|^p\in W^{1,2}(M)\cap L^\infty(M)$
	, so the replacement is valid.

	So for $1<p<\frac{5}{4}$, we have
	\begin{align}
		  {} & \int_{\Sigma } \left|A\right|^{2p}\left|\phi \right|^{2p}  \nonumber \\
		  \le{} & 12 \int_{\Sigma } \left|\nabla_{\Sigma }\phi
		  \right|^{2p}+2(p-1) \int_{\Gamma} \left|A\right|^{2p-2}\left|\phi
		  \right|^{2p}\tau(\log(\left|A\right|))-4  \int_{\Gamma}
		  \left|A\right|^{2p-2}\left|\phi \right|^{2p}\boldsymbol{H}_\Gamma\cdot
		  \tau\nonumber \\
		  \le{}& 12 \int_{\Sigma } \left|\nabla_{\Sigma }\phi
		  \right|^{2p}+4\int_{\Gamma} \left[
		  (p-1)\left|\tau(\log(\left|A\right|))\right|-\boldsymbol{H}_\Gamma\cdot \tau
	  \right] \left|A\right|^{2p-2}\left|\phi \right|^{2p}  .
	  \label{eq:proof_Lp4}
	\end{align}

	This is exactly what we want. 

	If it happens that several minimal surfaces in $\{\Sigma _1, \cdots ,\Sigma
	_q\} $ are flat, and we assume $\left|A_i\right|$ cannot equal to 0 on the
	support of $\phi _i$ on $\Sigma _i$ which is not flat, then after replacing
	$\phi $ by $\left|A\right|^{p-1}\phi $, $\left|A_i\right|^{p-1}\phi _i$ will
	vanish on $\Sigma_i$ which is flat. So all the integration will still make
	sense if we just drop the terms integrated on $\Sigma _i$ which is flat and
	all the formulas above are valid. 
	
	So this estimate still holds for the general case.
\end{proof}

\section{Proof of main theorem}

In this section, we will choose a suitable test function to get our main
theorem.

Before that, let's discuss the angle condition of $\Sigma _i$ along $\Gamma$
first.

Note that since $\partial \Sigma _i$ is smooth in $\Sigma _i$, the conormal
vector fields $\tau_i$ is a smooth normal vector field along $\Gamma$. We say 
$M=(\theta _1\Sigma _1, \cdots ,\theta _q\Sigma _q;\Gamma)$ 
has \textit{equilibrium angles} along $\Gamma$ if for all $1\le i, j\le q	$, the
angles between $\tau_i, \tau_j$ are constants along $\Gamma$.

So if $M $ has equilibrium angles along $\Gamma $, we can choose a smooth
normal vector fields $W $ along $\Gamma $ such that $W $ has the constant
angle with $\tau_i $ and unit length along $\Gamma $. 
After choosing a orientation on the
normal vector field, we can write the angle between $\tau_i,W $ as $\alpha _i $,
which is a constant function on $\Gamma $. By disturbing $W $ a bit if
necessary, we can assume $\alpha _i\in (0,\frac{\pi}{2})
\cup (\frac{\pi}{2},\pi)\cup (\pi,\frac{3\pi}{2}) \cup (\frac{3\pi}{2},2\pi)
$ for all
$1\le i\le q $. We denote this normal vector field as $W_0 $ and we will use
it to construct our test function. Note that we have $W_0\cdot \nu_i=\sin 
\alpha_i$ under some suitable orientation of $\Sigma _i $,
which is non-zero and constant on $\Gamma $.
By our choice of $\alpha_i $, we know that $\cos \alpha_i $ is non-zero for
each $i $.

\begin{theorem}
	Let $M=(\theta _1\Sigma _i, \cdots ,\theta _q\Sigma _q;\Gamma)$
	be a minimal multiple junction surface in $\mathbb{R}^3 $. We assume
	$M$ is complete, stable and has quadratic area growth. Furthermore, 
	we assume $\Gamma $ is compact and has equilibrium angles along 
	$\Gamma $. Then each $\Sigma _i$ is flat.
	\label{thm:1MainTheorem}
\end{theorem}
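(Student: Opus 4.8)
The plan is to use the curvature estimate of Theorem~\ref{thm:Lp_Estimate} (more precisely the sharper inequality \eqref{eq:LpEstmate}) with a carefully chosen test function $\phi$, and to show that both sides can be controlled so that, letting the support of $\phi$ grow to exhaust $M$, the left-hand side $\int_\Sigma |A|^{2p}|\phi|^{2p}$ is forced to vanish; this gives $A_i\equiv 0$ on each $\Sigma_i$, i.e. each $\Sigma_i$ is flat. First I would build the test function in two pieces. Near $\Gamma$ (which is compact) I must respect the compatible condition: I want $\operatorname{sign}(\phi_i)|A_i|^{p-1}|\phi_i|^p$ to be the normal projection of a single smooth vector field along $\Gamma$. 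Using the fixed normal field $W_0$ from the preceding paragraph, for which $W_0\cdot\nu_i=\sin\alpha_i$ is a nonzero constant, the natural choice is to set $\phi_i$ so that $|A_i|^{p-1}|\phi_i|^p$ matches $\sin\alpha_i$ (up to a common scalar function of the arclength on $\Gamma$) along $\partial\Sigma_i$; solving for $\phi_i$ gives $\phi_i \sim (\sin\alpha_i)^{1/p}|A_i|^{-(p-1)/p}$ near $\Gamma$. Because $|A_i|\in\tilde C_+(\Sigma_i)$, this is a bounded $H^1$ function near $\Gamma$ by the finite-order analysis of Section~4, and crucially $\nabla\phi_i$ does not vanish there, which is exactly why compactness of $\Gamma$ is used. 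Far from $\Gamma$ I would splice this against a standard logarithmic cutoff on a large intrinsic ball $B_R^M$, of the Schoen--Simon--Yau type, so that $\int_\Sigma |\nabla\phi|^2|A|^{2p-2}|\phi|^{2p-2}\to 0$ as $R\to\infty$ thanks to the quadratic area growth hypothesis.

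The key technical steps, in order, are the following. (1) Estimate the boundary term on $\Gamma$: since $\Gamma$ is compact and $|A_i|\in\tilde C_+$, the quantity $\tfrac{p-1}{2}|\tau_i(\log|A_i|)|\,|A_i|^{2p-2}$ integrates to something finite and, as shown in Section~4, uniformly bounded; moreover by choosing $p$ close to $1$ the factor $(p-1)$ makes the $\tau_i(\log|A_i|)$ contribution arbitrarily small, while the term $-\langle\boldsymbol H_\Gamma,\tau_i\rangle|A_i|^{2p-2}$ is bounded on the compact $\Gamma$. So the total $\Gamma$-contribution, against the fixed near-$\Gamma$ profile of $\phi$, stays bounded independently of $R$. (2) Estimate the interior gradient term: with the logarithmic cutoff and quadratic area growth, $C\int_\Sigma|\nabla\phi|^2|A|^{2p-2}|\phi|^{2p-2}\le C'/\log R\to 0$. (3) Combine: \eqref{eq:LpEstmate} then yields $\int_{\Sigma\cap B_R^M}|A|^{2p}|\phi|^{2p}\le (\text{bounded }\Gamma\text{-term}) + o(1)$, and here is where I must be careful — the right-hand side is only bounded, not small, so I cannot immediately conclude vanishing.

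The main obstacle, and the heart of the argument, is precisely this residual boundary term. To kill it I would exploit the freedom in $p$ and a scaling/bootstrapping argument: the left-hand integral is over an exhausting region while the troublesome boundary integral lives only on the fixed compact $\Gamma$, so after establishing that $\int_\Sigma |A|^{2p}|\phi|^{2p}$ is globally finite (uniformly in $R$), I would feed this back to improve the estimate. Concretely, finiteness of $\int_{\Sigma}|A|^{2p}$ over the whole complete surface, together with quadratic area growth, forces a decay of $|A|$ at infinity; one then reruns \eqref{eq:LpEstmate} with a genuine cutoff that is $1$ on a large set and shows the interior term is not merely $o(1)$ but absorbs the boundary contribution after dividing by the growing left-hand side. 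If at the end $\int_\Sigma|A|^{2p}|\phi|^{2p}$ were positive, the inequality with $p\to 1^+$ would contradict the boundedness of the $\Gamma$-term against the area growth rate; hence $|A_i|\equiv 0$. The delicate point throughout is verifying that all the near-$\Gamma$ integrals remain finite and that the chosen $\phi$ genuinely satisfies the compatible condition \eqref{eq:compactible_condition} with the single field $W_0$ — this is where the equilibrium-angle hypothesis and the finite-order machinery of Section~4 do the essential work.
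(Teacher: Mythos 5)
Your proposal correctly identifies the overall framework (apply Theorem \ref{thm:Lp_Estimate} with a test function built from $W_0$, take $p\to 1^+$ to kill the $(p-1)$ boundary term, use quadratic area growth for the far field), but it breaks down at the two points where the actual work lies. First, your test function is inadmissible. Solving $|A_i|^{p-1}|\phi_i|^p=\sin\alpha_i$ gives $\phi_i\sim|A_i|^{-(p-1)/p}$, but $|A_i|$ may vanish at points of $\Gamma$ (its zeros are isolated when $\Sigma_i$ is non-flat, yet nothing prevents them from lying on the junction); near a zero of order $k$ in a conformal chart, $\phi_i\sim|z|^{-k(p-1)/p}$ is unbounded and not in $H^1$, so your claim that this is ``a bounded $H^1$ function near $\Gamma$'' is false and Theorem \ref{thm:Lp_Estimate} (which requires $\phi\in H^1(M)\cap L^\infty(M)$) cannot be invoked. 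The paper makes the opposite choice of your ``common scalar function'': it sets $\phi_i=\mathrm{sign}(c_i)|c_i|^{1/p}\left(\rho\, g_i^{(p-1)/p}+\rho_r-\rho\right)$ with $g_i=\prod_{j\neq i}|A_j|$ extended off $\Gamma$ by Lemma \ref{lem:Extension_finite_order}, so that along $\Gamma$ one has $\mathrm{sign}(\phi_i)|A_i|^{p-1}|\phi_i|^p=c_i\bigl(\prod_{j}|A_j|\bigr)^{p-1}$, the normal projection of the single field $\bigl(\prod_j|A_j|\bigr)^{p-1}W_0$. Thus $\phi_i$ \emph{vanishes} at the bad points instead of blowing up; the price is that $\phi\notin W^{1,2p}$ near the zeros of $g_i$, which is exactly why the sharper estimate \eqref{eq:LpEstmate} and the bad-set analysis (the sets $P_i$ and $B$ in the paper's proof) are needed.

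Second, you have no working mechanism for the term $\mathrm{III}=\int_\Gamma\boldsymbol{H}_\Gamma\cdot\tau\,|A|^{2p-2}|\phi|^{2p}$. It carries no factor of $(p-1)$, so it does not shrink as $p\to1$ or as $r\to\infty$, and your bootstrap cannot remove it: a finite positive left-hand side is perfectly consistent with a bounded right-hand side, so no contradiction appears, and ``dividing by the growing left-hand side'' fails because the left-hand side does not grow. The paper kills $\mathrm{III}$ with a sign trick you are missing: since $\sum_i\theta_i\tau_i=0$ along $\Gamma$, replacing $W_0$ by its rotation by $90$ degrees in the normal bundle of $\Gamma$ reverses the sign of $\mathrm{III}$, so one may assume $\mathrm{III}\ge0$ and simply drop $-\mathrm{III}$. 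Moreover, this rotation is unavailable when one of the $\Sigma_i$ is flat (the compatible condition then forces $W_0=\tau_1$, hence $c_1=0$), and the paper must run a separate second case resting on a Gauss--Bonnet lemma in the style of White, $\int_\Gamma-\boldsymbol{H}_\Gamma\cdot\tau_i\le\int_{\Sigma_i}-K_{\Sigma_i}$, to reach a quantitative contradiction; your proposal contains no case distinction at all, and your construction implicitly assumes every $|A_i|$ is nonvanishing on $\Gamma$. (A smaller issue: your claim $\int_\Sigma|\nabla\phi|^2|A|^{2p-2}|\phi|^{2p-2}\le C/\log R$ is unjustified since $|A|^{2p-2}$ is not bounded; the paper instead applies Young's inequality, absorbs $\frac{p-1}{p}\int_\Sigma|A|^{2p}|\phi|^{2p}$ into the left side, and bounds $\int_\Sigma|\nabla\phi|^{2p}\le Cr^{2-2p}$ using quadratic area growth, for which the simple linear cutoff suffices once $p>1$ is fixed.)
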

\begin{proof}
	First, let's define a smooth cutoff function $\eta(t) $ on $\mathbb{R}  $ by
	\[
		\eta(t)=
		\begin{cases}
		1, & t\le 1\\
		0, & t\ge 2
		\end{cases}
	\]
	such that $\eta(t) $ is a monotonically decreasing on $\mathbb{R}  $ and
	$\left|\eta'(t)\right|<2 $.

	\noindent\textbf{First Case: None of $\Sigma _i $ is flat}.

	Let's write the $L^p $ estimate 
	(\ref{eq:LpEstmate}) with following notation
	\[
		\int_{\Sigma } \left|A\right|^{2p}\left|\phi \right|^{2p
		}\le C_1 \text{I}+ \text{II}- \text{III} 
	\]
	where
	\begin{align}
		\text{I}={} & \int_{\Sigma } \left|A\right|^{2p-2}\left|\phi \right|
		^{2p}\left|\nabla_{\Sigma }\phi \right|^2\nonumber \\
		\text{II}={} & \int_{\Gamma}
		\frac{p-1}{2}\left|\tau(\log \left|A\right|)\right|
		\left|A\right|^{2p-2}\left|\phi \right|^{2p}\nonumber \\
		\text{III}={}& \int_{\Gamma} 
		\boldsymbol{H}_\Gamma\cdot \tau 
		\left|A\right|^{2p-2}\left|\phi \right|^{2p}
		\label{eq:pf_def_I_II_III}
	\end{align}
	We will estimate these three terms one by one after choosing a 
	suitable test function.

	We use $T_r(\Gamma) $ to denote the tubular neighborhood of
	$\Gamma $, i.e. we define
	\[
		T_r(\Gamma):= \{x\in M:d_\Gamma(x)< r\} .
	\]
	
	For simplicity, we assume $\left|A\right| $ has no zeros in $
	\overline{T_2(\Gamma)}\backslash \Gamma$.
	Otherwise, we can do a rescaling of $M $ if necessary.

	Define the cutoff function $\rho_r $ on $M $ by
	\[
		\rho_r(x)=\eta \left(\frac{d_\Gamma(x)}{r}\right).
	\]
	
	So $\rho_r $ will has support in $T_{2r}(\Gamma) $ and equal to 1 in
	$T_r(\Gamma) $ and $\left|\nabla_{\Sigma }\rho_r\right|<\frac{2}{r} $.
	We will write $\rho(x):=\rho_1(x) $. Define $c_i=W_0\cdot \nu_i
	=\sin \alpha_i $.

	Now let's define a function $g_i $ on $\Gamma $ by
	\[
		g_i(x)=\prod _{j=1,\cdots ,q, j\neq i}\left|A_j\right|.
	\]

	Since $\left|A_i\right||_{\Gamma}\in \tilde{C} _+(\Gamma) $, 
	$g_i(x)\in \tilde{C} _+(\Gamma) $, we can extend $g_i(x) $ to
	the whole $\Sigma _i $ such that $g_i\in \tilde{C} _+(\Sigma _i) $ and
	positive on $\Sigma _i\backslash \partial \Sigma _i $ by 
	Lemma \ref{lem:Extension_finite_order}.

	Then we choose our $\phi  $ on $M $ as
	\[
		\phi _i=\text{sign}(c_i)\left|c_i\right|^{\frac{1}{p}}
		\left( \rho g_i^{\frac{p-1}{p}}+\rho_r-\rho  \right) 
	\]
	for some $r>2 $. 

	At first, we note that $\phi  $ will satisfies the compatible condition
	(\ref{eq:compactible_condition}) since on $\Gamma $, $\phi _i=
	\text{sign}(c_i)\left|c_i\right|^{\frac{1}{p}}g_i^{\frac{p-1}{p}}$, and
	\begin{align*}
		\text{sign}(\phi _i)\left|A_i\right|^{p-1}\left|\phi _i\right|^{p}=
		c_i \prod _{i=1}^q \left|A_i\right|^{p-1}=\left( \prod _{i=1}^q
		\left|A_i\right|^{p-1}\right)W_0 \cdot \nu_i.
	\end{align*}
	
	Note that $\prod _{i=1}^q \left|A_i\right|\in L_{\text{loc}}^\infty(
	\Gamma
	) $ by
	Trace Theorem (or just by the properties of functions in $\tilde{C} _+(
	\Gamma
	) $). 

	Now let's check $\phi \in H^1(M) $. Note that $g_i^{\frac{p-1}{p}} $ is
	either smooth or has form $f(z)\left|z\right|^{\frac{k(p-1)}{p}} $ near 
	an arbitrary point in $\Sigma _i $ for some smooth function $f $ in
	conformal coordinate, and $\left|z\right|^{\frac{k(p-1)}{p}} \in H^{1}
	_{\text{loc}}(M)$, so $g_i^{\frac{p-1}{p}}\in H^1_{\text{loc}}(M) $. 
	Note that $\rho,\rho_r\in W^{1,\infty}(M) $ since they are Lipschitz 
	functions with compact support, we get $\phi \in H^1(M) $ by H\"older's
	inequality.
	
	So by Theorem \ref{thm:Lp_Estimate}, we can put our $\phi  $ in the
	estimate (\ref{eq:LpEstmate}). The goal of the following proof is to make
	the terms I, II and III small enough with relatively
	large $r $ by choosing $p$ very close to $1 $ and some suitable $\phi  $.

	Now let's fix some $\varepsilon >0 $ and some $r_0>2 $ from now on. \\

	\noindent\textbf{Estimation of III.}

	Right now we do not know the sign of $\text{III} $. But if $\text{III}<0 $,
	we can rotate $W_0 $ by 90 degrees in normal bundle to get a new vector
field $\tilde{W} _0 $ along $\Gamma $. So $\tilde{c} _i:=
	\tilde{W_0} \cdot \nu_i$. Then we can define the new test function
	$\tilde{\phi }  $
	\[
		\tilde{\phi } _i= \text{sign}(\tilde{c_i} ) \left|\tilde{c} _i\right|
		^{\frac{1}{p}} \left( \rho g_i^{\frac{p-1}{p}}+\rho_r-\rho \right) .
	\]

	Along $\Gamma $, we have
	\[
		\left|\phi _i\right|=\left|c_i\right|^{\frac{1}{p}}g_i^{\frac{p-1}{p}}
		,\ \ \ 
		|\tilde{\phi } _i|=|\tilde{c_i} |^{\frac{1}{p}}
		g_i^{\frac{p-1}{p}}.
	\]

	If we define $g=\prod _{i=1}^q \left|A_i\right| $ along $\Gamma $, we have
	\begin{align}
		\sum_{i=1}^{q}\boldsymbol{H}_\Gamma\cdot \tau_i \theta _i
		 \left|A_i\right|^{2p-2}|\tilde{\phi _i} |^{2p}
		={} & \sum_{i=1}^{q} \boldsymbol{H}_\Gamma\cdot \tau_i \theta _i \tilde{c_i} ^2
		g^{2p-2}\nonumber \\
		 ={} & g^{2p-2}\sum_{i=1}^{q} \boldsymbol{H}_\Gamma\cdot\tau_i \theta _i 
		 \left( \tilde{W} _0\cdot \nu_i \right) ^2\nonumber \\
		 ={} & g^{2p-2}\sum_{i=1}^{q}\boldsymbol{H}_\Gamma\cdot\tau_i \theta _i
		 \left( W_0 \cdot \tau_i \right) ^2\nonumber \\
		 ={} & g^{2p-2}\sum_{i=1}^{q} \boldsymbol{H}_\Gamma\cdot \tau_i \theta _i
		 \left[ 1- \left( W_0\cdot \nu_i \right) ^2 \right]\nonumber  \\
		 ={} & g^{2p-2}\sum_{i=1}^{q} -\boldsymbol{H}_\Gamma \cdot\tau_i \theta _i 
		 \left( W_0\cdot\nu_i \right) ^2\nonumber \\
		 ={} & -\sum_{i=1}^{q} \boldsymbol{H}_\Gamma\cdot \tau_i \theta _i
		 \left|A_i\right|^{2p-2}\left|\phi _i\right|^{2p}.\nonumber
	\end{align}

	Here we've used the fact that $M $ is minimal along $\Gamma $.
	Hence 
	\[
		\tilde{\text{III}}:=\int_{\Gamma} \boldsymbol{H}_\Gamma\cdot\tau\left|A\right|
		^{2p-2}|\tilde{\phi } |^{2p}=-\text{III}.
	\]
	
	So after replacing $W_0 $ by $\tilde{W} _0 $, we can get 
	\begin{equation}\text{III}>0.
	\label{eq:pf_III_es}\end{equation}\\

	\noindent\textbf{Estimation of II.}

	Note that $\Gamma $ is compact, $\left|A\right| ^{2p-2}$ and $\left|\phi 
	\right|^{2p}$ are all bounded on $\Gamma$ uniformly
	with respect to $p\in (1,\frac{5}{4}) $. Since the integration 
	$\int_{\Gamma} \left|\tau(\log \left|A\right|)\right|  $ is finite by
	the property of smooth finite order functions, the integration
	\[
		\int_{\Gamma} \left|\tau(\log \left|A\right|)\right|
		\left|A\right|^{2p-2}\left|\phi \right|^{2p}< \infty.
	\]

	Hence we can choose $p_1>1 $ very close to 1 such that for every 
	$p\in (1,p_1) $, we have \begin{equation}\text{II}<
	\varepsilon \label{eq:pf_II_es}.\end{equation}\\

	\noindent\textbf{Estimation of I.}

	
	The trick part for estimating I is the points that $\phi  $ fails to be
	in $W^{1,p}(M) $ nearby, which are the zeros of $\phi  $. Denote
	$P_i=\{x\in \Sigma _i: g_i(x)=0\}  $. So by the definition of $g_i $, we know
	$P_i \subset \partial \Sigma _i $. Choose $x \in P_i $, 
	let's consider the integration
	\[
		\text{I}_{x,\delta }:=
		\int_{\Sigma _i \cap B^M_\delta (x)} \left|A_i\right|^{2p-2}\left|\phi_i 
		\right|^{2p-2}\left|\nabla_{\Sigma _i}\phi _i\right|^2
	\]
	for $\delta <1 $.

	In $\Sigma _i\cap 
	B^M_\delta (x) $, we have $\left|\phi _i\right|=\left|c_i\right|
	^{\frac{1}{p}}g_i ^{\frac{p-1}{p}}$. Still we work at conformal
	coordinate near $x$ and we choose $\delta$ small enough to make sure
	$\Sigma _i\cap  B_\delta ^M(x) $ is in this coordinate chart.
	Then $g_i  $ has form $g_i(z)=f(z)\left|z\right|^l $ for some $f(z) $
	positive and smooth near $x$ and $l\in \mathbb{Z}_+ $.

	We compute
	\begin{align}
		\left|c_i\right|^{-\frac{2}{p}}\left|\nabla_{\Sigma _i}\phi _i\right| ^2
		={} & \left( \frac{p-1}{p} \right) ^2g_i^{-\frac{2}{p}} \left|
		\nabla_{\Sigma _i}g_i\right|^2\nonumber  \\
		={} & \left( \frac{p-1}{p} \right) ^2 f^{-\frac{2}{p}}\left|z\right|
		^{-\frac{2l}{p}}\left|\left|z\right|^l\nabla_{\Sigma _i}f+
		lf\left|z\right|^{l-1}\nabla_{\Sigma _i}\left|z\right|\right|^2\nonumber 
		\\
		\le{} & 2 \left( \frac{p-1}{p} \right) ^2 f^{-\frac{2}{p}}
		\left( \left|z\right|^{2l-\frac{2l}{p}}\left|
		\nabla_{\Sigma _i}f\right|^2+l^2f^2\left|z\right|
		^{2l-2-\frac{2l}{p}}\left|\nabla_{\Sigma _i}
		\left|z\right|\right|^2\right).
		\label{eq:pf_gd_1}
	\end{align}
	
	Note that $\left|\nabla_{\Sigma }\left|z\right|\right| $ might not equal to
	1. Nerveless, it is bounded near $x$. Since $f $ is smooth
	and positive
	so it has lower bound near $x$, we know the term $f^{-\frac{2}{p}}
	\left|z\right|^{2l-\frac{2l}{p}}\left|\nabla_{\Sigma _i}f\right|^2$ is 
	bounded near $x$. Hence the integration on this term can be arbitrary
	small by choose $\delta$ small enough.

	For the term $l^2f^{2-\frac{2}{p}}\left|z\right|
	^{2l-2-\frac{2l}{p}}\left|\nabla_{\Sigma _i}\left|z
	\right|\right|^2$, if we assume the metric in this coordinate is 
	$\lambda(z)^2dz d\overline{z} $ and 
	$l^2f^{2-\frac{2}{p}}\left|\nabla_{\Sigma _i}\left|z\right|\right|\lambda^2$
	is bounded by $C $, then
	\begin{align}
		\int_{\Sigma _i\cap B_\delta ^M(x)}
		\!\!\!l^2f^{2-\frac{2}{p}}\left|z\right|
		^{2l-2-\frac{2l}{p}}\left|\nabla_{\Sigma _i}\left|z\right|\right|^2
		\le{} & \int_{|z|<\delta \text{ and }z\in \Sigma _i}
		\!\!\!\!C \left|z\right|
		^{2l-2-\frac{2l}{p}}\frac{\sqrt{-1}}{2}dz \wedge d\overline{z}\nonumber
		\\
		 \le{} & \frac{2\pi C}{2l-\frac{2l}{p}}\delta ^{2l-\frac{2l}{p}}\nonumber \\
		 ={}& \frac{\pi C p}{l(p-1)}\delta ^{2l-\frac{2l}{p}}.
		 \label{eq:pf_gd_2}
	\end{align}
	Combining (\ref{eq:pf_gd_1}),(\ref{eq:pf_gd_2}), and noting $\left|A_i\right|
	^{2p-2}\left|\phi _i\right|^{2p-2}$ is bounded near
	$x$, we get
	\[
		\text{I}_{x,\delta_x }\le C \left( \frac{p-1}{p} \right) ^2\left( \varepsilon '
		+ \frac{\pi Cp}{l(p-1)}\delta ^{2l-\frac{2l}{p}}\right)
		\le 2C \left( \frac{p-1}{p} \right) ^2 \varepsilon '
	\]
	for some $\delta _x $ small enough.
	
	Here, the constance $C $ does not depend on $p$. So we can choose $p_x
	\in (1,p_0)$ small enough such that if $p\in (1,p_x) $, then
	\[
		\text{I}_{x,\delta_x }<\varepsilon'
	\]
	for any given $\varepsilon '>0 $. Note that the set $P_i $ is a finite set
	for each $1\le i\le q $, so we can choose $p_1 = \min _{x\in \cup P_i}p_x$
	and $\delta _0= \min _{x\in \cup  P_i}\delta _x $. Denote $B\in M $ as
	\[
		B:= \bigcup _{i=1}^q \bigcup _{x\in P_i}
		\left( \Sigma _i \cap B_{\delta _0}(x) \right).
	\]
	Then
	\[
		\int_{B}\left|A\right|^{2p-2}\left|\phi \right|^{2p-2}
		\left| \nabla_{\Sigma }\phi \right|^2\le 
		\sum_{i=1}^{q} \theta _i \sharp(P_i) \varepsilon '
	\]
	where $\sharp(P_i) $ denote the cardinality of the set $P_i $. By requiring
	$\varepsilon ' $ small, we can find $p_1 \in (1,p_0), \delta_0 >0$ such
	that
	\begin{equation}
		\text{I}'=\int_{B} \left|A\right|^{2p-2}\left|\phi \right|^{2p-2}
		\left|\nabla_{\Sigma }\phi \right|^2\le \frac{\varepsilon}{C_1}.
		\label{eq:pf_Es_B}
	\end{equation}

	Now we focus on the estimation of
	\[
		\text{I}''=\int_{\Sigma \backslash B} \left|A\right|^{2p-2}
		\left|\phi \right|^{2p-2}\left|
		\nabla_{\Sigma }\phi \right|^2.
	\]

	Clearly we have
	\begin{equation}
		\text{I}=\text{I}'+\text{I}''.
		\label{eq:pf_I_c}
	\end{equation}

	Note that right now we know $\phi  $ is positive and smooth with compact
	support in $\Sigma \backslash B $, so $\phi \in W^{1,\infty}(M) $. So we
	can apply Young's inequality to get
	\begin{equation}
		\text{I}''\le \frac{p-1}{p}\int_{\Sigma \backslash B} 
		\left|A\right|^{2p}\left|\phi 
		\right|^{2p} + \frac{1}{p} \int_{\Sigma \backslash B} 
		\left|\nabla_{\Sigma }
		\phi \right|^{2p}<\infty .
		\label{eq:I_2_Es}
	\end{equation}
	
	So we choose $p_2\in (1,p_1) $ small such that 
	\begin{equation}\frac{p_2-1}{p_2}<
	\frac{1}{4C_1}
	\label{eq:pf_p_2}
	\end{equation} 
	so that the first term in the above inequality can be absorbed
	by left hand side of (\ref{eq:LpEstmate}). For the second term in 
	(\ref{eq:I_2_Es}), we have
	\begin{align}
		\int_{\Sigma \backslash B} \left|\nabla_{\Sigma }\phi \right|^{2p
		}={} & \int_{T_2(\Gamma)\backslash B}  \left|\nabla_{\Sigma }\phi 
		\right|^{2p}+\int_{T_{2r}(\Gamma)\backslash T_r(\Gamma)} 
		\left|\nabla_{\Sigma }\phi \right|^{2p}\nonumber \\
		 =:{} & \text{I}_1+\text{I}_2.
		 \label{eq:pf_L^p}
	\end{align}

	In $T_2(\Gamma)\backslash B $, we know $\left|\phi _i\right|=
	\left|c_i\right|^{\frac{1}{p}}\rho \left( g_i^{\frac{p-1}{p}}
	-1\right)+\left|c_i\right|^{\frac{1}{p}} $. Hence
	\begin{align}
		\left|\nabla_{\Sigma _i}\phi _i\right|^{2p} ={} & 
		\left|c_i\right|^{2}\left|
		\frac{p-1}{p}g_i^{-\frac{1}{p}}\nabla_{\Sigma _i}g_i\rho+
		\nabla_{\Sigma _i}\rho \left( g_i^{\frac{p-1}{p}}-1 \right) \right|^{2p}
		\nonumber \\
		\le{}& 2^{2p}c_i^2 \left( \frac{p-1}{p} \right) ^{2p}g_i^{-2}\rho^{2p}
		\left|\nabla_{\Sigma _i}g_i\right|^{2p}\nonumber \\
			 &+2^{2p}c_i^2\left|\nabla_{\Sigma _i}\rho\right|^{2p}
			 \left|g_i^{\frac{p-1}{p}}-1\right|^{2p}\nonumber \\
		\le{}&C(p-1)^2g_i^{-2}\left|\nabla_{\Sigma _i}g_i\right|^{2p}+C
		\left|g_i^{\frac{p-1}{p}}-1\right|^{2p}.
	\end{align}

	Note that $g_i $ has positive upper and lower bound on $T_2(\Gamma)
	\backslash B$, $\left|\nabla_{\Sigma _i}g_i\right|$ has an upper bound
	on $T_2(\Gamma)\backslash B $ since it is smooth. Hence, 
	as $p\rightarrow 1^+ $, $g_i \rightarrow 1$ uniformly. 
	So we can choose $p_3\in
	(1,p_2)$ small enough to make 
	\begin{equation}
		\text{I}_1<\frac{\varepsilon}{C_1}
		\label{eq:pf_I_1}
	\end{equation}
	for any $p\in (1,p_3) $. From now on, we will fix $p\in(1,p_3) $.

	For the integration $\text{I}_2 $, we have
	\begin{align}
		\text{I}_2 ={} & \int_{T_{2r}(\Gamma)\backslash T_r(\Gamma)} 
		c^2\left|\nabla_{\Sigma}\rho_r\right|^{2p}\nonumber \\
		\le{} & C\int_{T_{2r}(\Gamma)\backslash T_r(\Gamma)}
		\frac{1}{r^{2p}}\nonumber \\
		\le{} & C r^{2-2p}\ \ \ \text{By area growth condition}.
		\label{eq:pf_L_2}
	\end{align}

	Here the constant $C $ does not depend on $p $ and $r $. Since we've fixed
	$p $, we can choose $r>r_0 $ so large, such that
	\begin{equation}
		\text{I}_2\le Cr^{2-2p}< \frac{\varepsilon}{C_1}.
		\label{eq:pf_I_2_fin}
	\end{equation}
	
	Now, let's combine the estimates (\ref{eq:pf_I_2_fin}), (\ref{eq:pf_I_1}),
	(\ref{eq:pf_L^p}), (\ref{eq:I_2_Es}), 
	(\ref{eq:pf_p_2}), (\ref{eq:pf_I_c}),
	(\ref{eq:pf_II_es}), (\ref{eq:pf_III_es}), (\ref{eq:pf_def_I_II_III})
	with (\ref{eq:LpEstmate}) to get
	\begin{equation}
		\int_{\Sigma } \left|A\right|^{2p}\left|\phi \right|^{2p}
		\le \frac{1}{4}\int_{\Sigma \backslash B} 
		\left|A\right|^{2p}\left|\phi \right|^{2p}
		+\varepsilon +\varepsilon +\varepsilon +\varepsilon .
		\label{eq:pf_Lp_est_1}
	\end{equation}

	So
	\begin{equation}
		\int_{\Sigma } \left|A\right|^{2p}\left|\phi \right|^{2p}\le
		8\varepsilon .
	\end{equation}
	
	By definition of $\phi  $, we have
	\begin{align}
		\int_{T_{r_0}(\Gamma)\backslash T_2(\Gamma)} 
		\min \{1, \left|A\right|^{4}\}c^2 \le{} &  \int_{
			T_r(\Gamma)\backslash T_2(\Gamma)
		} \left|A\right|^{2p}c^2 \nonumber \\
		 \le{} & \int_{\Sigma } \left|A\right|^{2p}\left|\phi \right|^{2p} 
		 \nonumber \\
		 \le{} & 8 \varepsilon .
	\end{align}
	
	So the left hand side of the above inequality does not depend on $r $ and
	$p $. By arbitrary choice of $\varepsilon  $ and noting $c_i\neq 0 $ on
	each $\Sigma _i $, we know actually $\left|A\right|=0 $ on $T_{r_0}(
	\Gamma
	) \backslash T_2(\Gamma)$, and thus this implies each $\Sigma _i $ is flat,
	which contradicts our assumption that each $\Sigma _i $ is non-flat.\\
	
	\noindent\textbf{Second case: One of $\Sigma _i $ is flat.}

	For the case that one of $\Sigma _i $ being flat, we suppose $\Sigma _1 $ is
	lying the plane $P $. Clearly, if there are another $\Sigma _i $, which is
	flat and different from $\Sigma  _1$, $\Sigma _i $ should be lying $P $,
	too since we've assume $\Gamma $ is compact. Again, we write it as $
	\Sigma _2$ for simplicity. So the only possible choice of
	$\Sigma _2 $ is $\Sigma _2=P\backslash (\Sigma _1 \cup \partial \Sigma _1)$.
	Note that $P $ is stable, so we can remove it from this triple junction
	surface to get the remaining one $((\theta _1-\theta _2)
	\Sigma _1, \theta _3\Sigma _3,\cdots , \theta _q\Sigma _q;\Gamma) $ or 
	$((\theta _2-\theta _1)\Sigma _2, \theta _3 \Sigma _3,\cdots 
	,\theta _q\Sigma _q;\Gamma) $ is unstable. So WOLG, we assume there are
	only one $\Sigma _i $, which we call it $\Sigma _1 $, is flat.
	
	This time we choose $W_0 = \tau_1$. The trick park is the term 
	III might not have a favorable sign. So we need
	a bit more precise estimation of the total curvature. 

	Let's use $K_i $ to denote the sectional curvature on $\Sigma _i $. 
	Following from B. White's proof (\cite{white1987complete}), we have the
	following lemma.

	\begin{lemma}
		For each $\Sigma _i $ which is non-compact with boundary $\Gamma $, we
		have
		\[
			\int_{\Gamma} - \boldsymbol{H}_\Gamma\cdot \tau_i \le \int_{\Sigma _i} -K_{\Sigma
			_i}  
		\]
		where the $K_{\Sigma _i} $ is the sectional curvature of $\Sigma _i $.
		\label{lem:Boundary_curvature_estimate}
	\end{lemma}
	\begin{proof}
		Fix a point $p_0\in \Gamma $, and we define
		\[
			B_r:= \{ p\in \Sigma _i: d_i(p,p_0)<r\} .
		\]
		
		We write $\Gamma_r=\partial B_r\backslash \Gamma $, the remaining 
		boundary part of $B_r $ except $\Gamma $.

		Note that we can choose a large $r_0 $ such that $\Gamma
		\subset B _{r_0}$. So for $r>r_0 $, we know $\Gamma $ and
		$\Gamma_r $ do not connect with each other.

		By the result of P. Hartman \cite{hartman1964geodesic}, we know
		$\Gamma_r $ is, for almost all $r $, a piecewise smooth, embedded closed
		curve in $\Sigma _i $. So we can apply Gauss-Bonnet theorem to get
		\begin{align}
			{} & \int_{B_r} K_{\Sigma _i} + \int_{\Gamma}\boldsymbol{H}_\Gamma\cdot (-\tau_i)
			+\int_{\Gamma_r} \kappa_g + 
			\sum_{  }^{} (\text{exterior angles of }\Gamma_r)\nonumber \\
			={} & 2 \pi \chi(B_r)=2\pi(2-2h(r)-c(r))
		\end{align}
		where $h(r)$ and $c(r) $ are the number of handles and the number of
		boundary components, respectively, of $B_r $ with $r>r_0 $. Here we
		also use $\kappa_g $ to denote the curvature of $\Gamma_r $ inside
		the surface $\Sigma _i $ with respect to the inner conormal vector
		field.

		Let $L(r) $ be the length of $\Gamma_r $. So by the first variation
		formula of piecewise smooth curve, we have
		\[
			L'(r)=\int_{\Gamma_r} \kappa_g+ \sum \text{exterior
			angles of $\Gamma_r $} .
		\]

		We also note that $c(r)\ge 2 $ since $\Gamma $ has at least one
		component and $\Gamma_r $ is always non-empty since we've assume 
		$\Sigma _i $ is complete and not compact for any $r>r_0 $. So
		combining with the above Gauss-Bonnet formula we've got, we can get
		\[
			-\int_{\Gamma} \boldsymbol{H}_\Gamma\cdot\tau_i+L'(r)\le - \int_{B_r} 
			K_{\Sigma _i}.
		\]
		
		Note that $L(r)>0 $ for all $r>r_0 $, we have $
		\lim_{} \sup _{r\rightarrow \infty}L'(r)\ge 0$. And since $K_{\Sigma 
		_i}\le 0 $, we can take $r\rightarrow \infty $ to get
		\[
			-\int_{\Gamma} \boldsymbol{H}_\Gamma\cdot\tau_i 
			\le -\int_{\Sigma _i} K_{\Sigma _i}  .
		\]
		This is what we want.
	\end{proof}

	Let's go back to the proof of the main theorem. Again, we choose the function
	$g_i $ on $\Gamma $ as
	\[
		g_i(x)=\prod _{j=2,\cdots ,q, j\neq i} \left|A_j\right|
	\]
	and choose our $\phi  $ on $M $ as
	\[
		\phi :=\phi_{r,p} = \text{sign}(c) \left|c\right|^{\frac{1}{p}}
		\left( \rho g ^{\frac{p-1}{p}}+\rho_r-\rho \right) 
	\]
	where $c_i = W_0\cdot \nu_i $. Here we use subscript to indicate $\phi  $
	depends on $r $ and $p $ if needed. Clearly, $\text{sign}(\phi _i)
	\left|A_i\right|^{p-1}\left|\phi _i\right|^{p}$ satisfies the compatible
	condition. 
	
	This time, we do not have a good sign for the term III, so we keep it
	in our estimates. Based on essentially same argument, we can get a similar
	estimate like (\ref{eq:pf_Lp_est_1}) as
	\begin{equation}
		\int_{\Sigma } \left|A\right|^{2p}\left|\phi \right|^{2p}
		\le \frac{1}{4} \int_{\Sigma \backslash B}\left|A\right|^{2p}
		\left|\phi \right|^{2p}+4\varepsilon - \text{III}
		\label{eq:pf_Lp_Est_5.21}
	\end{equation}
	for $p $ small enough and $r $ large enough which might depend on $p $.
	If it happens that $\text{III}\ge0 $, the previous argument shows that 
	each $\Sigma _i $ for $i=2,\cdots ,q $ is flat.

	For simplicity, we write III in the form which depends on $p $ as
	\[
		\text{III}_p :=\int_{\Gamma} \left|A\right|^{2p-2}\left|
		\phi \right| ^{2p} \boldsymbol{H}_\Gamma\cdot\tau = \int_{\Gamma} 
		g^{2p-2}c^2 \boldsymbol{H}_\Gamma\cdot\tau
	\]
	where $g= \prod _{i=2}^q \left|A_i\right| $.
	So if we write $\text{III}_0 $, we just mean
	\[
		\text{III}_0:= \int_{\Gamma} c^2 \boldsymbol{H}_\Gamma\cdot\tau .
	\]
	We note $g^{2p-2}\rightarrow 1
	$ a.e. on $\Gamma $ since the zeros of $g $ are isolated, so by Dominated 
	Convergence theorem, we have
	\[
		\int_{\Gamma} g^{2p-2}c^2\boldsymbol{H}_\Gamma\cdot \tau\rightarrow \int_{\Gamma} 
		c^2\boldsymbol{H}_\Gamma\cdot \tau
	\]
	as $p\rightarrow 1 $. This means we can choose $p $ large to make
	\begin{equation}
		\left|\text{III}_p-\text{III}_0\right|\le \frac{1}{8} \left|\text{III}_0\right|
		\label{eq:pf_IIIp_est}
	\end{equation}
	since $\text{III}_0<0 $ as we've assumed.

	Similarly, $g_i^{\frac{p-1}{p}}\rightarrow 1 $ a.e. on $\Sigma _i $ and
	$\left|A_i\right|^{2p}\rightarrow \left|A_i\right|^2 $ a.e. on 
	$\Sigma _i $, we have
	\[
		\int_{\Sigma } \left|A\right|^{2p}\left|\phi \right|^{2p}\rightarrow 
		\int_{\Sigma } \left|A\right|^2c^2\rho_r^2 
	\]
	as $p\rightarrow 1 $.
	
	Hence, for some fixed $r $, we can always choose $p $ small enough to 
	make sure
	\begin{equation}
		\left|\int_{\Sigma } 
		\left|A\right|^{2p}\left|\phi \right|^{2p}-
		\left|A\right|^2c^2\rho^2_r\right|\le \frac{1}{8}\left|
		\text{III}_0\right|.
		\label{eq:pf_Lp_pclosedto1}
	\end{equation}
	
	By our previous lemma \ref{lem:Boundary_curvature_estimate}, we have
	\[
		\int_{\Sigma } \left|A\right|^2c^2=-\int_{\Sigma } 2K_\Sigma c^2
		\ge -2\int_{\Gamma} c^2\boldsymbol{H}_\Gamma\cdot\tau=2\left|\text{III}_0\right|.
	\]

	Note $\int_{\Sigma } \left|A\right|^2c^2\rho_r^2\rightarrow 
	\int_{\Sigma } \left|A\right|^2c^2 $, so we can fix a $r_1 $ large enough
	to get
	\begin{equation}
		\int_{\Sigma } \left|A\right|^2c^2\rho_{r_1}^2\ge \frac{15}{8}
		\left|\text{III}_0\right|.
		\label{eq:pf_L2_curvature}
	\end{equation}

	Combining with (\ref{eq:pf_Lp_pclosedto1}), we have
	\[
		\int_{\Sigma } \left|A\right|^{2p}\left|\phi _{r_0,p}\right|^{2p}
		\ge \frac{7}{4} \left|\text{III}_0\right|
	\]
	for $p $ sufficient small. Note $\phi _{r,p} $ is an increasing function
	with respect to variable $r $ as $r>2 $, so we actually have 
	\begin{equation}
		\int_{\Sigma } \left|A\right|^{2p}\left|\phi _{r,p}\right|^{2p
		}\ge \frac{7}{4}\left|\text{III}_0\right| 
		\label{eq:pf_Lp_est_2}
	\end{equation}
	for all $r>r_0, 1<p<p_0 $ for some $p_0 $ sufficient closed to 1. 

	Hence, we can choose $p $ small enough again, and then choose $r >r_0$ large
	enough to make the estimate (\ref{eq:pf_Lp_Est_5.21}) holds for 
	$\varepsilon = \frac{1}{32}\left|\text{III}_0\right| $. So 
	(\ref{eq:pf_Lp_Est_5.21}) implies
	\[
		\int_{\Sigma } \left|A\right|^{2p}\left|\phi \right|^{2p}\le 
		\frac{1}{6}\left|\text{III}_0\right|-\frac{4}{3}\text{III}_p\le 
		\frac{1}{6}\left|\text{III}_0\right|+\frac{3}{2}\left|
		\text{III}_0\right|=\frac{5}{3}\left|\text{III}_0\right|
	\]
	where we've used (\ref{eq:pf_IIIp_est}). This is a contradiction with
	the estimate (\ref{eq:pf_Lp_est_2}). So it is impossible that only 
	$\Sigma _1 $ is flat. Hence, we finished our proof.
\end{proof}

\begin{theorem}
	Let $M=(\theta _1, \Sigma _1, \cdots ,\theta _q \Sigma _q; \Gamma) $ be 
	a minimal multiple junction surface in $\mathbb{R}^3  $. We assume $M $ is
	complete, stable and has quadratic area growth. Furthermore, we assume
	$\Gamma $ is a straight line and $M $ has equilibrium angles along $\Gamma$,
	then each $\Sigma _i $ is flat.
	\label{thm:Gam_straight}
\end{theorem}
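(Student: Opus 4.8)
The plan is to run the argument of Theorem \ref{thm:1MainTheorem} essentially unchanged, the only genuine novelty being the non-compactness of $\Gamma$. The decisive simplification is that a straight line has vanishing curvature vector, $\boldsymbol{H}_\Gamma \equiv 0$, so in the notation of (\ref{eq:pf_def_I_II_III}) the term $\mathrm{III}$ vanishes identically and the $L^p$ estimate (\ref{eq:LpEstmate}) collapses to $\int_\Sigma|A|^{2p}|\phi|^{2p} \le C_1\,\mathrm{I} + \mathrm{II}$. In particular the sign problems of the second (flat) case of Theorem \ref{thm:1MainTheorem} evaporate and I will not need Lemma \ref{lem:Boundary_curvature_estimate}.

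First I would reuse the equilibrium-angle field $W_0$ with $c_i = W_0\cdot\nu_i = \sin\alpha_i \ne 0$ (constant along $\Gamma$, since $\Gamma$ is straight and the angles are fixed) together with the extensions $g_i \in \tilde{C}_+(\Sigma_i)$ of $\prod_{j\ne i}|A_j|$ furnished by Lemma \ref{lem:Extension_finite_order}. The one new ingredient is a longitudinal cutoff: letting $e$ be the direction of $\Gamma$ and using the ambient linear coordinate $\varphi(x)\cdot e$, set $\psi_R(x) = \eta(|\varphi(x)\cdot e|/R)$, a common scalar factor on all sheets with $|\nabla_{\Sigma_i}\psi_R| \le 2/R$. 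I then take
\[
\phi_i = \mathrm{sign}(c_i)|c_i|^{1/p}\,\psi_R\bigl(\rho\,g_i^{(p-1)/p} + \rho_r - \rho\bigr),
\]
which still satisfies the compatible condition because along $\Gamma$ one computes $\mathrm{sign}(\phi_i)|A_i|^{p-1}|\phi_i|^p = \psi_R^{\,p}\bigl(\prod_j|A_j|^{p-1}\bigr)W_0\cdot\nu_i$, the normal part of the single field $\psi_R^{\,p}\bigl(\prod_j|A_j|^{p-1}\bigr)W_0$.

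With this $\phi$ the support is the compact box $T_{2r}(\Gamma)\cap\{|\varphi\cdot e| < 2R\}$, and I would couple the two scales by setting $R = r$, so the box lies in an intrinsic ball of radius $\sim r$ and has area $\le Cr^2$ by the quadratic area growth. The term $\mathrm{I}$ then splits as before into the near-zero piece, the inner transition on $T_2(\Gamma)\setminus T_1(\Gamma)$, the transverse-cutoff piece supported where $\nabla\rho_r \ne 0$, and the new longitudinal-cutoff piece supported where $\nabla\psi_r \ne 0$. After the Young-inequality step (\ref{eq:I_2_Es}) the last two are each bounded by $Cr^{-2p}\cdot\mathrm{Area} \le Cr^{2-2p}$, which tends to $0$ as $r\to\infty$ because $p>1$; the inner transition tends to $0$ since $g_i^{(p-1)/p}\to 1$, and the near-zero piece is estimated verbatim.

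The hard part will be the boundary term $\mathrm{II} = \tfrac{p-1}{2}\int_\Gamma|\tau(\log|A|)|\,|A|^{2p-2}|\phi|^{2p}$, now spread over the whole line $\Gamma$ rather than a compact curve, so its finiteness as $r\to\infty$ is no longer automatic. On $\Gamma$ the integrand equals $\tfrac{p-1}{2}\sum_i\theta_i|c_i|^2\psi_r^{2p}|\tau_i(\log|A_i|)|\,g^{2p-2}$ with $g = \prod_j|A_j|$, and the entire difficulty is to bound $\int_\Gamma|\tau_i(\log|A_i|)|\,g^{2p-2}$ uniformly in $r$ so that the prefactor $\tfrac{p-1}{2}$ can drive $\mathrm{II}$ to $0$. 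This is exactly where the quadratic area growth must enter: it should force the curvature to decay at the ends of each $\Sigma_i$ (as in the finite-total-curvature theory of complete minimal surfaces), leaving each $|A_i|$ with only finitely many zeros on $\Gamma$, controlling the near-zero contributions, and making $g^{2p-2}$ decay at infinity so the tail converges. Granting this, I fix $p>1$ and let $r\to\infty$ to annihilate the interior pieces, then let $p\to 1$ to annihilate $\mathrm{II}$; by Fatou the left-hand side dominates $\int_{\Sigma\setminus T_2(\Gamma)}|A|^2c^2$, which must therefore vanish, so $|A_i|\equiv 0$ on the open set $\Sigma_i\setminus T_2(\Gamma)$ and, by analyticity of minimal surfaces, on all of $\Sigma_i$. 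The impossibility of this decay without area growth is precisely what lets the $Y$-shaped helicoid remain stable.
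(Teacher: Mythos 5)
Your reduction $\mathrm{III}\equiv 0$ (since $\boldsymbol{H}_\Gamma=0$ for a straight line) and your longitudinal cutoff $\psi_R$ coupled to $R=r$ are both fine, and your modified test function does satisfy the compatible condition (\ref{eq:compactible_condition}). But the proof has a genuine gap exactly where you flag it: the term $\mathrm{II}$. You need $\int_\Gamma |\tau_i(\log|A_i|)|\,g^{2p-2}$ to be finite and bounded uniformly in $r$, and you justify this by asserting that quadratic area growth ``should'' force curvature decay at the ends and leave each $|A_i|$ with finitely many zeros on $\Gamma$. Neither assertion is proved, and neither follows from area growth in any direct way: the zeros of a non-flat $|A_i|$ are isolated but may accumulate at infinity along the non-compact line $\Gamma$; each zero contributes to $\int_\Gamma|\tau_i(\log|A_i|)|\,g^{2p-2}$ an amount governed by the local expansion $f(z)|z|^{l}$ there, and the local integrability argument of Section 4 gives no uniform bound over infinitely many such points; away from zeros there is no a priori decay of $|\tau_i(\log|A_i|)|$. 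The same non-compactness breaks your claim that the near-zero piece of $\mathrm{I}$ is ``estimated verbatim'': in the compact case the paper chooses $p_1=\min_{x\in\cup P_i}p_x$ and $\delta_0=\min_{x\in\cup P_i}\delta_x$ over a \emph{finite} set of zeros, whereas in your scheme the number of zeros inside the support of $\psi_r$ grows with $r$, while your order of limits fixes $p$ before sending $r\to\infty$, so the sum of near-zero contributions is not controlled.

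The paper closes all of this with one observation you are missing: Schwarz reflection. Since $\Gamma$ is a straight line, each $\Sigma_i$ extends as a minimal surface by $180^{\circ}$ rotation about $\Gamma$, and invariance of $|A_i|$ under this reflection forces $\tau_i(|A_i|)=0$ along $\Gamma$, so $\mathrm{II}$ vanishes identically --- no uniformity in $r$ or smallness of $p-1$ is ever needed. Moreover, rotating the sheets about $\Gamma$ so that two of them share the same conormal and applying the Hopf boundary lemma shows all the $\Sigma_i$ are congruent, hence $|A_i|=|A_j|$ along $\Gamma$; the compatible condition then only requires $\mathrm{sign}(\phi)|\phi|^{p}$ to be a normal projection, so the plain cutoff $\phi=\mathrm{sign}(c)|c|^{1/p}\rho_r$ works with no $g_i$ at all, no zeros ever enter the estimate of $\mathrm{I}$, and the standard Schoen--Simon--Yau argument with quadratic area growth finishes. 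If you want to rescue your route, you must supply the missing global estimate on $\int_\Gamma|\tau(\log|A|)|\,g^{2p-2}$; the reflection principle shows that this quantity is in fact zero, which is why the paper never confronts it.
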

\begin{proof}
	This case is much easier than the case of $\Gamma $ compact. Note that for
	any $i\neq j $, by rotating $\Sigma _i $ along $\Gamma $ for a suitable
	angle, we can make $\Sigma _i $ and $\Sigma _j $ share the same outward
	conormal of boundary. Hence by Hopf's boundary lemma, $\Sigma _i $ will
	be identical to $\Sigma _j $ after rotation.

	This says that each pieces of surface in $M $ are all isometric to each
	other. Moreover, by reflection principle, we also have $\tau_i(\left|
	A_i\right|) =0$ for each $1\le i\le q $. Hence the stability operator for
	$M $ is
	\[
		\int_{\Sigma } \left|A\right|^2\phi ^2\le \int_{\Sigma } 
		\left|\nabla_{\Sigma }\phi \right|^2
	\]
	for some $\phi =W\cdot \nu $. To apply the $L_p $ estimate, we need
	$\text{sign}(\phi )\left|A\right|^{p-1}\left|\phi \right|^{2p} $ to
	satisfy the compatible condition (\ref{eq:compactible_condition}). 
	Note that $\left|A_i\right|=\left|A_j\right| $, we only need to require
	$\text{sign}(\phi )\left|\phi \right|^{p} $ to satisfy 
	(\ref{eq:compactible_condition}).

	Now, as usual we choose $W_0 $ having constant angles with each $\tau_i $
	and make sure $c_i:= W_0\cdot \nu_i \neq 0$ for each $i $. Now we
	choose an arbitrary point $x_0 $ on $\Gamma $. Define cutoff function
	$\rho_r(x) $ support in $B_{2r}^M(x) $ which equals to 1 in $B_r^M(x) $ and
	has gradient less than $\frac{2}{r} $. 

	So we can choose our $\phi  $ as
	\[
		\phi = \text{sign}(c)\left|c\right|^{\frac{1}{p}}\rho_r.
	\]

	Hence we can apply $L^p $ estimate to get
	\[
		\int_{\Sigma } \left|A\right|^{2p}\left|\phi \right|^{2p}\le
		C_1\int_{\Sigma } \left|\nabla_{\Sigma }\phi \right|^{2p} .
	\]

	Standard argument in \cite{schoen1975curvature} will imply each $\Sigma _i $
	will be flat.
\end{proof}

\begin{remark}
	Indeed, we can remove the quadratic area growth condition in the case 
	$\Gamma $ a straight line. Actually, one can just use the result of 
	Bernstein's theorem for stable minimal surface to get this result.
\end{remark}

As a corollary, we can also get some result related to the stable capillary
minimal surface. 

\begin{corollary}
	Let $P $ be a plane in $\mathbb{R}^3  $. Then there is no 
	(oriented) stable complete minimal surface $\Sigma  $ with boundary $
	\partial \Sigma $ such that $\partial \Sigma \in P $, $\partial \Sigma  $
	compact, and $\Sigma  $ has constant angle with $P $ along $\partial 
	\Sigma $.
\end{corollary}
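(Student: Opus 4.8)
The plan is to realize $\Sigma$ as one sheet of a minimal multiple junction surface whose flatness is forced by Theorem \ref{thm:1MainTheorem}, and then to observe that a flat $\Sigma$ cannot exist. Write $\Gamma=\partial\Sigma$ and let $\alpha\in(0,\pi)$ be the constant contact angle between $\Sigma$ and $P$. Let $\Sigma^{*}$ be the reflection of $\Sigma$ across $P$, and let $\Omega\subset P$ and $P\setminus\overline{\Omega}$ be the two regions into which the compact curve $\Gamma$ divides $P$. I would then form
\[
 M=(\Sigma,\ \Sigma^{*},\ \theta_{1}\,\Omega,\ \theta_{2}\,(P\setminus\overline{\Omega});\ \Gamma),
\]
where $\theta_{1},\theta_{2}>0$ are to be fixed by the balancing condition. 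The reflection across the wall is essential: a purely planar junction cannot cancel the normal component of $\tau_\Sigma$, whereas adjoining $\Sigma^{*}$ does.

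First I would verify the minimality condition $\sum_i\theta_i\tau_i=0$ of Definition \ref{def:minimal}. Each sheet is minimal ($\Sigma,\Sigma^{*}$ are minimal and the planar pieces are totally geodesic), so only the balancing along $\Gamma$ is at issue. Decomposing $\tau_\Sigma=\cos\alpha\,e+\sin\alpha\,n$, where $n\perp P$ and $e\in P$ is the in-plane unit conormal, reflection gives $\tau_{\Sigma^{*}}=\cos\alpha\,e-\sin\alpha\,n$, so the normal components cancel and $\tau_\Sigma+\tau_{\Sigma^{*}}=2\cos\alpha\,e$ is tangent to $P$. Since $\tau_\Omega=-\tau_{P\setminus\overline{\Omega}}=\pm e$, the residual balancing is a single scalar equation which I solve by $\theta_{1}=1+\cos\alpha$, $\theta_{2}=1-\cos\alpha$, both positive because $|\cos\alpha|<1$. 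I would then check the remaining hypotheses of Theorem \ref{thm:1MainTheorem}: $M$ is complete (each sheet is complete, $\Omega$ is compact, and the sheets are glued along the compact $\Gamma$), $\Gamma$ is compact by assumption, and $M$ has equilibrium angles because all pairwise angles among the four conormals are determined by the single constant $\alpha$ and so are constant along $\Gamma$.

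The two delicate points are stability and quadratic area growth. For stability I would exploit the reflection symmetry: a compatible test function on $M$ splits into its symmetric and antisymmetric parts across $P$, the second variation (\ref{eq:Stability}) respects this splitting, and on each part the junction functional reduces to a multiple of the capillary second variation of $\Sigma$ together with the manifestly nonnegative Dirichlet energies of the totally geodesic planar sheets. The capillary stability of $\Sigma$ assumed in the statement is exactly nonnegativity of the former, whose boundary term is the $-\int_{\Gamma}\phi^2\,\boldsymbol{H}_\Gamma\cdot\tau$ of (\ref{eq:Stability}). For quadratic area growth, the planar sheets satisfy it trivially, while outside the compact set bounded by $\Gamma$ the sheet $\Sigma$ coincides with a boundaryless complete stable minimal surface, for which the quadratic bound is standard (cf. the results quoted in the introduction). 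I expect the stability transfer to be the main obstacle, since one must ensure that \emph{all} compatible variations of $M$, and not only the symmetric ones, are controlled by the capillary stability of the single sheet $\Sigma$.

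Finally I would apply Theorem \ref{thm:1MainTheorem} to conclude that every sheet of $M$, in particular $\Sigma$, is flat, so $\Sigma$ lies in a plane $P'$. But $\Sigma$ is complete with compact boundary $\Gamma\subset P$ meeting $P$ at the nonzero angle $\alpha$, forcing $P'\neq P$ and hence $\Gamma\subset P\cap P'$, a straight line. A connected complete flat surface-with-boundary is a region of $P'$ whose boundary $\Gamma$ either bounds a two-dimensional region (and so cannot be contained in a line) or is itself a line (and so is noncompact); either way this contradicts $\Gamma$ being a compact subset of $P\cap P'$. This contradiction shows that no such $\Sigma$ exists, proving the corollary.
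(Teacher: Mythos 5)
Your strategy — build a balanced junction by reflecting $\Sigma$ across $P$ and then quote Theorem \ref{thm:1MainTheorem} as a black box — is genuinely different from the paper's, but it founders on exactly the point you flag and then leave open: the stability hypothesis of Theorem \ref{thm:1MainTheorem} does not follow from capillary stability of $\Sigma$. Stability of $M=(\Sigma,\Sigma^{*},\theta_1\Omega,\theta_2(P\setminus\overline{\Omega});\Gamma)$ requires nonnegativity of (\ref{eq:Stability}) for \emph{every} compatible family $\phi_i=W\cdot\nu_i$, where $W$ is an arbitrary vector field along $\Gamma$ — in particular for $W=f\,n$, $n$ the unit normal of $P$, which moves $\Gamma$ off the plane. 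Capillary stability, by the paper's own definition ("stable of capillary energy under the variation fixing the plane $P$"), says nothing about such variations. Concretely, for $W=f\,n$ one gets $\phi_\Sigma=\pm f\cos\alpha$, $\phi_{\Sigma^{*}}=\mp f\cos\alpha$ and $\phi_\Omega=\phi_{P\setminus\overline{\Omega}}=f$ on $\Gamma$; since $|A|\equiv 0$ and $\tau_\Omega=-\tau_{P\setminus\overline{\Omega}}$ on the planar sheets, their contribution to (\ref{eq:Stability}) is a sum of Dirichlet energies plus the boundary term $-(\theta_1-\theta_2)\int_\Gamma f^2\,\boldsymbol{H}_\Gamma\cdot\tau_\Omega$, and $\theta_1-\theta_2=\pm 2\cos\alpha\neq 0$ whenever $\alpha\neq\pi/2$. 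Taking $f$ constant (so the extensions to the planar sheets can be taken constant, with zero Dirichlet energy) and $\Gamma$, say, convex, this boundary term is a definite negative multiple of $\int_\Gamma f^2$ for one of the two possible signs of $\cos\alpha$, while capillary stability only gives the qualitative bounds $Q_\Sigma\ge 0$, $Q_{\Sigma^{*}}\ge 0$ with no quantitative surplus to absorb it. So your $M$ need not be stable in the required sense, and the application of Theorem \ref{thm:1MainTheorem} is unjustified. (Lesser soft spots: the orientation-dependent sign in your choice $\theta_1=1+\cos\alpha$, $\theta_2=1-\cos\alpha$, and the area-growth claim for $\Sigma$, which needs an argument via interior stability and finite total curvature rather than "coinciding with a boundaryless complete stable surface"; but the stability transfer is the essential failure.)

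The paper avoids this issue entirely by never producing a stable junction: it observes that in the Second Case of the proof of Theorem \ref{thm:1MainTheorem} the test functions are built from $W_0=\tau_1$, a vector field tangent to the plane, so that $c_1=W_0\cdot\nu_1=0$ and the test function vanishes identically on the flat sheet. Hence that entire argument — the $L^p$ estimate (\ref{eq:LpEstmate}) combined with the Gauss--Bonnet estimate of Lemma \ref{lem:Boundary_curvature_estimate} — invokes the stability inequality (\ref{eq:stability_inequality}) only for plane-fixing variations, which is precisely the assumed capillary stability of $\Sigma$. In other words, the corollary is proved by re-running the Second Case with capillary stability substituted for junction stability, not by exhibiting a junction satisfying all hypotheses of the theorem. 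To salvage your route you would have to verify the stability inequality for your $M$ restricted to the test functions actually used in that proof (which do fix $P$) — but then you are reproducing the paper's argument rather than using the theorem as a black box.
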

Here the stability of the capillary minimal surface means this 
surface is stable of capillary energy under the variation fixing
the plane $P $.

This result is an immediate result in the proof of Theorem 
\ref{thm:1MainTheorem}. The variation we've taken in the proof
of the second case is just the one fixing the plane $P $.

\section{Further questions}

As we've seen, we still leave some questions related to
minimal multiple junction surface.

The first one is, what if $\Gamma $ is a neither compact nor straight
line?
\begin{question}
	Can we relax the condition of $\Gamma $ to be compact and
	straight line, so that we still get the similar 
	Bernstein's theorem for the stable minimal multiple
	junction surface?
\end{question}
In particular, we want to know the stability property of
universal cover of $Y $-shaped bent helicoid.

Another question is, we still need the help of area growth
to get control of curvature. So we may still want
to remove this condition in some sense.

\begin{question}
	Can one get the quadratic area growth for
	stable minimal multiple junction surfaces like the
	result in \cite{colding2002estimates} in some
	sense? 
\end{question}

In \cite{colding2002estimates}, one may need the simply connected
condition to get the quadratic area growth. This is not
a problem when talking about the usual smooth surface since
we can always take a universal cover without affecting 
stability. But things get unusual especially when requiring
$\Gamma $ compact. Moreover, one may still need
careful consideration when talking about the simply connected
multiple junction surfaces.

The last question is related to the higher dimension case.
\begin{question}
	Can one get the similar Bernstein's theorem for stable
	minimal multiple junction hypersurface with the
	hypersurface dimension greater than 2?
\end{question}
As we've seen, basically we can still get the similar $L^p $
estimation of curvature like Schoen, Simon, Yau's result 
\cite{schoen1975curvature} for $2\le n\le 5 $. But
we have an additional compatible condition along $\Gamma $,
so we need $p\rightarrow 1 $ in order to get
the curvature estimation near $\Gamma $. This will force 
our surface dimension to be $2 $.

\section*{Acknowledgements}
	I would like to thank my advisor Prof. Martin Li for
	his helpful discussions and encouragement.

This work is substantially supported by a research grant from the Research Grants Council of the Hong Kong Special Administrative Region, China [Project No.: CUHK 14301319]
\bibliographystyle{unsrt}  
\bibliography{references} 

\end{document}